\tikzset{
    MyPersp/.style={scale=1.8,x={(-2cm,-0.5cm)},y={(2cm,-0.5cm)},
        z={(0,1cm)}},
    MyPoints/.style={fill=white,draw=black,thick}
}
\numberwithin{equation}{section} 
\numberwithin{figure}{section} 
\numberwithin{table}{section} 
\title[Mixed t\^ete-\`a-t\^ete twists as monodromies]{ Mixed t\^ete-\`a-t\^ete 
twists as monodromies associated with 
holomorphic function germs} 
\author{Pablo Portilla Cuadrado}
\thanks{First author supported by SVP-2013-067644 Severo Ochoa FPI grant and by 
    project by MTM2013-45710-C2-2-P, the two of them by the Spanish Ministry of 
    Economy and Competitiveness MINECO; also supported 
    by the project ERCEA 615655 NMST Consolidator Grant.}
\author{Baldur Sigur{\dh}sson} 
\thanks{The second author is supported by 
the ERCEA 615655 NMST Consolidator Grant, by the Basque Government
through the BERC 2014-2017 program and by the Spanish Ministry of Economy and
Competitiveness MINECO: BCAM Severo Ochoa excellence accreditation
SEV-2013-0323.}
\date{\normalsize\today} 
\newtheorem{thmA}{Theorem}
\newtheorem{thm}[equation]{Theorem}
\Crefname{thm}{Theorem}{thm}
\newtheorem{lemma}[equation]{Lemma}
\Crefname{prop}{Proposition}{thm}
\newtheorem{cor}[equation]{Corollary}
\theoremstyle{definition}
\newtheorem{example}[equation]{Example}
\newtheorem{definition}[equation]{Definition}
\newtheorem{block}[equation]{}
\newtheorem{notation}[equation]{Notation}
\Crefname{notation}{Notation}{notation}
\newcommand{\RR}{\mathbb{R}}
\newcommand{\QQ}{\mathbb{Q}}
\newcommand{\C}{\mathbb{C}}
\newcommand{\NN}{\mathbb{N}}
\newcommand{\ZZ}{\mathbb{Z}}
\newcommand{\MCG}{\mathrm{MCG}}
\newcommand{\rot}{\mathrm{rot}}
\newcommand{\G}{\Gamma}
\newcommand{\MS}{\mathbb{S}}
\newcommand{\TG}{\tilde{\G}}
\newcommand{\Si}{\Sigma}
\newcommand{\tat}{t\^ete-\`a-t\^ete }
\newcommand{\Tat}{T\^ete-\`a-t\^ete }
\newcommand{\NNd}{\mathcal{N}}
\newcommand{\id}{\mathop{\mathrm{id}}\nolimits}
\newcommand{\B}{\mathcal{B}}
\newcommand{\calA}{\mathcal{A}}
\newcommand{\calC}{\mathcal{C}}
\newcommand{\calD}{\mathcal{D}}
\newcommand{\calB}{\mathcal{B}}
\renewcommand{\epsilon}{\varepsilon}
\newcounter{dummy}
\renewcommand{\thedummy}{\roman{dummy}}
\newtheorem{remark}[equation]{Remark}
\newcommand{\mynd}[4]
{
    \begin{figure}[#2]\begin{center}
            \ifpdf
            \input{#1.pdf_t}
            \else
            \input{#1.pstex_t}
            \fi
            \ifthenelse{\equal{#3}{}}
            {
            }
            {
                \caption{#3\label{#4}}
            }
        \end{center}
    \end{figure}
}
\begin{document}

    \begin{abstract}
    \Tat graphs were introduced by N. A'Campo in 2010 with the goal of
    modeling the monodromy of isolated plane curves. Mixed \tat graphs
    provide a generalization which define mixed \tat twists, which are
    pseudo-periodic automorphisms on surfaces. We characterize the
    mixed \tat twists as those pseudo-periodic automorphisms that have a
    power which is a product of right-handed Dehn twists around disjoint
    simple closed curves, including all boundary components.
    It follows that the class of \tat twists coincides with that of
    monodromies associated with reduced function germs on isolated complex
    surface singularities.
    \end{abstract}

 \maketitle
 
    \tableofcontents

    \section{Introduction}
    
    In \cite{Camp1} N. A'Campo introduced the notion of pure \tat graph in 
    order to model monodromies of plane curves. These 
    are metric ribbon graphs without univalent vertices that satisfy a special 
    property. If one sees the ribbon graph $\G$ as a strong deformation 
    retract of a surface $\Si$ with boundary, the \tat propety says that 
    starting  at any point $p$ and walking a distance of $\pi$ in any direction 
    and always turning right at vertices, gets you to the same point. This 
    property defines an  element in the mapping class group $\MCG^+(\Si, 
    \partial \Si)$ which is freely periodic, and is called the
    \emph{\tat twist} associated with $\G$.

    In \cite{Graf}, C. Graf proved 
    that if one allows univalent vertices in \tat graphs, then one is able to 
    model  all freely periodic mapping classes of $\MCG^+(\Si, \partial \Si)$ 
    with positive  fractional Dehn twist coefficients. In \cite{Bob} this 
    result was improved  by showing that one does not need to enlarge the 
    original class of metric  ribbon graphs used to prove the same theorem.
    
    However, the geometric monodromy of an isolated plane curve 
    singularity with one branch and more than one Puiseux pair is not of 
    finite 
    order (see \cite{Camp2}). For this purpose, the definition of mixed \tat 
    graph was introduced in \cite{Bob}. These are 
    metric ribbon graphs together with a filtration and a set of locally 
    constant functions that model a class of pseudo-periodic automorphisms.
    It was proved that this class includes the monodromy associated with an 
    isolated singularity on a plane curve with one branch.
    
    In this work we continue the study of (relative)
    mixed  \tat graphs and we improve and generalize results from \cite{Bob}.
    The main result is a complete characterization of the mapping classes that 
    can be modeled by a mixed \tat graph. This is the content of 
    \Cref{thm:realization_mixed} which says
    
    \begin{thmA}
    Let $\phi:\Si \to \Si$ be an automorphism with fixes the boundary.  Then 
    there exists a mixed \tat graph in $\Si$ inducing its mapping class in 
    $\MCG^+(\Si, \partial \Si)$ if and only if some power of
    $\phi$ is a composition of right handed Dehn twists around
    disjoint simple closed curves including all boundary components.
    \end{thmA}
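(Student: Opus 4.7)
The plan is to address the two implications separately. For the ``only if'' direction, I would unpack the definition of the mixed \tat twist from the filtration data. If $\G_0 \subset \G_1 \subset \cdots \subset \G_n = \G$ is the filtration of the mixed \tat graph, the successive regular neighborhoods form nested subsurfaces $\Si_0 \subset \Si_1 \subset \cdots \subset \Si_n = \Si$. The innermost piece $\G_0$ is a pure \tat graph, producing a freely periodic automorphism of $\Si_0$, and the locally constant function attached at each subsequent level prescribes right-handed fractional Dehn twists around the newly appearing boundary circles. Raising $\phi$ to the lcm of the orders of the periodic pieces kills all periodic contributions and leaves a product of integer right-handed Dehn twists supported on the collection $\partial\Si_0 \cup \partial\Si_1 \cup \cdots \cup \partial\Si_n$, which by construction contains $\partial\Si_n = \partial\Si$.

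For the converse I would start from $\phi$ with the prescribed power. By the canonical reduction theorem for pseudo-periodic homeomorphisms (Nielsen--Thurston, in Matsumoto--Montesinos normal form), there is a multicurve $C \subset \Si$ such that $\phi$ restricts after isotopy to a periodic map on each component $\Si^{(i)}$ of $\Si \setminus C$. The hypothesis that some power of $\phi$ is a product of \emph{right-handed} Dehn twists around a multicurve containing $\partial\Si$ translates into the statement that the screw numbers along every curve of $C$, and the fractional Dehn twist coefficients along every boundary of every piece $\Si^{(i)}$, are strictly positive. By Graf's realization theorem from \cite{Graf}, improved in \cite{Bob}, each periodic restriction is then realized by a pure \tat graph $\G^{(i)}$ embedded in $\Si^{(i)}$.

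It remains to assemble the pure \tat graphs $\G^{(i)}$ into a mixed \tat graph realizing $\phi$. I would order the pieces by depth in the decomposition of $\Si$ along $C$ (innermost pieces first), use this ordering as the filtration of the mixed \tat data, glue the $\G^{(i)}$ along arcs crossing the cutting curves of $C$, and encode the screw numbers at the curves of $C$ in the locally constant functions prescribed by the definition of a mixed \tat graph. The resulting mixed \tat twist, by the ``only if'' direction already established, has the same canonical reduction system as $\phi$, the same periodic model on each piece, and the same fractional twist data at every boundary and every curve of $C$.

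The main obstacle is this last assembly step: one must verify that the locally constant function and filtration data can be arranged so that the induced mapping class equals $\phi$ in $\MCG^+(\Si,\partial\Si)$, not merely a pseudo-periodic map with the same invariant structure. The uniqueness statement for pseudo-periodic maps (Matsumoto--Montesinos) reduces this to matching a finite list of invariants --- the orders and rotation numbers of the periodic pieces, the screw numbers along $C$, and the boundary rotation numbers --- each of which is explicitly readable from both the graph data and from $\phi$.
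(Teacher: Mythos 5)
Your outline reproduces the easy half and the general shape of the hard half, but the step you yourself flag as ``the main obstacle'' is a genuine gap, and the fix you propose does not close it. Appealing to the Matsumoto--Montesinos uniqueness theorem by ``matching a finite list of invariants'' can at best show that the mixed \tat twist you build is \emph{conjugate} to $\phi$ by some homeomorphism of $\Si$; the theorem you are proving demands equality of classes in $\MCG^+(\Si,\partial\Si)$, i.e.\ an isotopy rel boundary, and conjugacy does not give that. Moreover, the claim that the invariants are ``explicitly readable from the graph data'' presupposes exactly what has to be arranged: the paper's proof of \Cref{thm:realization_mixed} builds the graph \emph{inside} $\Si$, invariantly under (an isotoped copy of) $\phi$, level by level, so that $\phi$ and $\phi_\G$ literally agree on the graph, on the annuli parametrizations, and on $\partial\Si\setminus\partial^1\Si$. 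Making this work is the real content: one needs an invariant spine of each deeper piece whose relative boundary carries a prescribed metric and whose product structures are carried to each other by $\phi$ (\Cref{lem:tech_tat}), and one needs to be able to shrink the lengths of the relative boundary circles of the already-built graph (\Cref{lem:technical_mixed_bound}) so that the values of $\delta_\bullet$ can be chosen to reproduce the given screw numbers, via the formula of \Cref{rem:screw_number}, while keeping the mixed \tat compatibility condition of \Cref{rem:equiv_mixed_tat}. None of this bookkeeping appears in your assembly step, and without it ``glue the graphs along arcs crossing the cutting curves'' does not determine the mapping class rel boundary (the rotation data $c$ along the annuli, not just the screw numbers, must be encoded by the chosen retraction lines).

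Two further points would make your construction fail as stated. First, using ``depth in the decomposition'' as the filtration is not well defined as a filtering function: adjacent pieces can sit at the same distance from $\partial\Si$, and an orbit of annuli can join a piece to itself (a loop in $G(\phi)$, which is exactly what amphidrome orbits produce); the paper must first modify the canonical form by inserting central annuli so that the distance function becomes a filtering function (\Cref{rem:modified_canonical}), and your filtration only makes sense after such a modification. Second, Graf's theorem (or \Cref{thm:characterization_tat}) applies only to the pieces meeting the pointwise-fixed boundary, where the restriction is freely periodic with positive fractional Dehn twist coefficients rel that boundary; the deeper pieces are permuted and their restrictions rotate their boundary circles, so they are not realized by quoting the relative \tat characterization but by the invariant-spine construction of \Cref{lem:tech_tat}, with metrics dictated by the annuli below. (Finally, mind the sign conventions of this paper: right-handed twists correspond to \emph{negative} screw numbers and \emph{positive} fractional Dehn twist coefficients, whereas you assert positive screw numbers.)
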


    A reduced holomorphic function germ on an isolated surface singularity
    has an associated Milnor fibration with a monodromy which fixes the
    boundary. It is known that this monodromy is pseudo-periodic and a power
    of it is a composition of right-handed Dehn twists around disjoint simple
    closed curves, which include all the boundary curves.
    It follows that the Milnor fiber associated with such a function germ
    contains a mixed \tat graph which defines the monodromy.
    Conversely, a result by Neumann and Pichon 
    \cite{NeumPich} says that any such a surface automorphism is realized
    as the monodromy associated with a function germ. Hence

    \begin{thmA}
    Mixed \tat twists are precisely the monodromies associated with reduced
    function germs on isolated surface singularities.
    \end{thmA}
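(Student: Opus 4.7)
The plan is to deduce Theorem B directly from Theorem A together with two external inputs about Milnor fibrations, by reducing the biconditional to a single algebraic characterization of the mapping class. Theorem A translates the geometric question ``does $\phi$ admit a mixed \tat graph?'' into the purely topological condition that some power of $\phi$ is a product of right-handed Dehn twists around disjoint simple closed curves including every boundary component. So the strategy is to match this condition with the known structural properties of geometric monodromies coming from reduced function germs on isolated surface singularities.

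For the ``monodromy $\Rightarrow$ mixed \tat'' direction, I would start from a reduced holomorphic germ $f\colon (X,0)\to(\CC,0)$ on an isolated surface singularity, take its Milnor fibration, and consider the geometric monodromy $\phi$, which fixes the boundary of the Milnor fiber $\Si$. The classical theory (due to A'Campo, L\^e, and others) shows that $\phi$ is pseudo-periodic and, crucially, that a suitable power of $\phi$ is isotopic to a composition of right-handed Dehn twists around disjoint simple closed curves, including all boundary components (these come from the multiplicities along the exceptional divisor of a good resolution). Feeding this into Theorem A yields a mixed \tat graph in $\Si$ realizing the class of $\phi$.

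For the converse ``mixed \tat $\Rightarrow$ monodromy'' direction, I would apply Theorem A in the opposite way: given a mixed \tat twist $\phi$ on $\Si$, we know a power of it is a product of right-handed Dehn twists around disjoint simple closed curves including all of $\partial \Si$. The theorem of Neumann and Pichon (\cite{NeumPich}) provides exactly the realization statement needed: any mapping class with this property is the geometric monodromy of some reduced function germ on some isolated surface singularity. Composing the two implications gives the biconditional.

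The only real obstacle is that both external ingredients must be invoked with the precise normalization used in Theorem A: the Dehn twists must be right-handed, the supporting curves must be disjoint, and \emph{all} boundary components must appear among them. I would therefore spend the bulk of the write-up carefully checking that the hypotheses in the A'Campo--L\^e style statement on the monodromy side and in the Neumann--Pichon realization statement on the converse side match this normalization verbatim, so that Theorem A can be applied without modification. Once this bookkeeping is settled, Theorem B follows with no further work.
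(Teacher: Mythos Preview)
Your proposal is correct and follows essentially the same route as the paper: both directions are deduced from Theorem~A together with the two external inputs you name---the classical fact that the monodromy of a reduced germ on an isolated surface singularity is pseudo-periodic with a power equal to a product of right-handed Dehn twists including all boundary curves, and the Neumann--Pichon realization theorem for the converse. The paper's proof of the corresponding corollary is in fact a one-line citation of \Cref{thm:realization_mixed} and \cite[Theorem~2.1]{NeumPich}, exactly as you outline.
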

    
    The structure of the work is the following. In 
    \Cref{sec:mapping_class_group} we briefly introduce notation
    related to the mapping 
    class group that we use throughout the text. 
    
    In \Cref{sec:pseudo} we fix notation and conventions about pseudo-periodic 
    automorphisms of surfaces. Not all of this section is contained in 
    \cite{Bob} since we treat a broader class of pseudo-periodic automorphisms 
    in the present text. In particular we allow amphidrome orbits of annuli. 
    This section is important for \Cref{sec:main_thm} which contains the main 
    results.
    
    In \Cref{sec:pure_tat,sec:mixed} we recall some necessary definitions and 
    results about pure and mixed \tat graphs.
    
    \Cref{sec:main_thm} starts with the statement of the main
    \Cref{thm:realization_mixed}. Two Lemmas which are used in the proof follow
    and the section ends with the proof of the main result.
    
    The work ends with an example that depicts a mixed 
    \tat graph.
      
    \section{The mapping class group}\label{sec:mapping_class_group}
    
    \begin{block}\label{bloc:fixed_bound}
        Let $\Si$ be a surface with $\partial \Si \neq \emptyset$. We denote by
        $\MCG(\Si)$ the 
        mapping class group given by the automorphisms of $\Si$ up to isotopy, 
        where 
        the automorphisms of the isotopy do not neccesarily fix the boundary. 
        Let $\partial^1 \Si \subset \partial \Si$ be a subset formed by some 
        boundary 
        components of $\Si$. We will denote by $\MCG(\Si, \partial^1 \Si)$ the 
        mapping 
        class group given by automorphisms of $\Si$ that are the identity 
        restricted 
        to $\partial^1 \Si$ and where isotopies are along automorphisms that 
        are the identity on these boundary components.
        
        Let $\phi: \Si \to \Si$ be a automorphism, we denote its class in 
        $\MCG(\Si)$ by
        $[\phi]$. If 
        $\phi|_{\partial^1 \Si}=\id$ we denote its class in $\MCG(\Si, 
        \partial^1 \Si)$
        by $[\phi]_{\partial^1 \Si}$.
        
        Given two automorphisms $\phi$ and $\psi$ of $\Sigma$ that both leave 
        invariant some subset $B\subset \partial \Si$ such that 
        $\phi|_B=\psi|_B$, we say 
        they are \emph{isotopic relative to the action $\phi|_B$} if there 
        exists a family of automorphisms of $\Sigma$ that isotope them as 
        before and such that any 
        automorphism of the family has the same restriction to $B$ as $\phi$ 
        and $\psi$. We write   $[\phi]_{B, \phi|_B}=[\psi]_{B,\phi|_B}$. We 
        denote by $\MCG(\Si, B, \phi|_B)$ the set of classes 
        $[\phi]_{B,\phi|_B}$ with respect to this equivalence relation. We 
        denote by $\MCG^+(\Si, B, \phi|_B)$ if we restrict to automorphisms 
        preserving orientation.
    \end{block}
    
    \begin{block}\label{bloc:rotation_numbers}
        Consider an automorphism $\phi: \Si \to \Si$ with $\phi|_{\partial^1 
            \Si}= \id$ 
        for some subset $\partial^1 \Si \subset \partial \Si$. Let $\calD_i$ 
        denote a 
        right-handed Dehn twist around a curve parallel to the boundary 
        component $C_i 
        \subset \partial ^1\Si $. Suppose that $[\phi] \in \MCG(\Si)$ is of 
        finite order, 
        i.e. there exists a natural number  $n$ such that $[\phi]^n = [\id]$. 
        Then we have that $[\phi^n]_{\partial^1 \Si} = [\calD_1 ]_{\partial^1 
        \Si}^{t_1} \cdots [\calD_r]_{\partial^1 \Si}^{t_r}$ with $t_i \in \ZZ$. 
        We call $t_i/n$ the {\em fractional Dehn twist coefficient} of $\phi$ 
        at the component 
        $C_i$. 
    \end{block}

    \section{Pseudo-periodic automorphisms}\label{sec:pseudo}
    
    We recall conventions, definitions and results from \cite[Section 6.]{Bob}. 
    that we 
    will use in the present work. We also extend some of the notions there to 
    cover some cases that were not treated in that work.
    
    \begin{definition}\label{def:pseudo}
        A automorphism $\phi:\Si \rightarrow \Si$ is pseudo-periodic if it is 
        isotopic 
        to a automorphism satisfying that there exists a finite collection of 
        disjoint 
        simple closed curves $\mathcal{C}$ such that 
        \begin{enumerate}
            \item $\phi(\mathcal{C})= \mathcal{C}$.
            \item $\phi|_{\Si \setminus \mathcal{C}}$ is freely isotopic to a 
            periodic automorphism.
        \end{enumerate}
        
        Assuming that none of the connected components of $\Si \setminus 
        \mathcal{C}$ is either a disk or an annulus and that the set of curves 
        is minimal, 
        which is 
        always possible, we name $\calC$ an admissible set of curves for 
        $\phi$.  
    \end{definition}

    The following theorem is a particularization on pseudo-periodic 
    automorphisms 
    of the more general Corollary 13.3 in \cite{Farb} that describes a {\em 
        canonical form} for every automorphism of a surface. 
    
    \begin{thm}[Almost-Canonical Form and Canonical Form]\label{theo:canonical} 
        Let 
        $\Si$ be a surface with $\partial\Si\neq \emptyset$. Any 
        pseudo-periodic map of 
        $\Si$ is isotopic to an automorphism in \emph{almost-canonical form}, 
        that 
        means a automorphism $\phi$ which has an  admissible set of curves 
        $\calC=\{\calC_i\}$ and annular neighborhoods $\calA=\{\calA_i\}$ with 
        $\calC_i \subset \calA_i$ such that
        \begin{enumerate}
            \item $\phi(\calA)=\calA$. 
            \item The map $\phi|_{\overline{\Si\setminus\calA}}$ is periodic.
        \end{enumerate}
        When the set $\calC$ is minimal we say that $\phi$ is in canonical form.
    \end{thm}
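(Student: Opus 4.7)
The plan is to deduce this theorem directly from the Nielsen--Thurston canonical form (Corollary 13.3 of \cite{Farb}) as the authors indicate. That result produces, for every orientation-preserving automorphism of a surface, an invariant multicurve whose complementary pieces are permuted so that each return map is either periodic or pseudo-Anosov. The hypothesis of pseudo-periodicity in \Cref{def:pseudo} rules out pseudo-Anosov pieces, so the Nielsen--Thurston decomposition collapses to exactly the structure claimed, modulo the technical step of thickening $\calC$ into an annular system $\calA$ and turning isotopy statements into honest equalities.

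Concretely, I would start from the representative provided by \Cref{def:pseudo}: an automorphism $\phi'$ isotopic to $\phi$ with an invariant collection of disjoint simple closed curves $\calC=\{\calC_i\}$ whose complement has no disk or annulus components, such that $\phi'|_{\Si\setminus\calC}$ is freely isotopic to a periodic automorphism $\psi$. Next, choose pairwise disjoint closed tubular neighborhoods $\calA_i\supset\calC_i$ whose widths are constant along each $\phi'$-orbit of curves; a small ambient isotopy supported near $\calC$ then arranges $\phi'(\calA)=\calA$ on the nose, yielding condition~(1) of the almost-canonical form.

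The main obstacle is upgrading the free isotopy from $\phi'|_{\Si\setminus\calC}$ to $\psi$ into an actual equality on $\overline{\Si\setminus\calA}$. The issue is that a \emph{free} isotopy allows the boundary curves $\partial\calA$ to drift, so its restriction to $\overline{\Si\setminus\calA}$ is not an isotopy fixing $\partial\calA$. The remedy is to modify $\psi$ on each piece of $\overline{\Si\setminus\calA}$ by a periodic collar diffeomorphism so that its trace on $\partial\calA$ matches that of $\phi'$; this is possible because the free-isotopy discrepancy between $\phi'|_{\partial\calA}$ and $\psi|_{\partial\calA}$ amounts to a rotation of each boundary circle, which is itself periodic and so can be composed into $\psi$ without destroying periodicity. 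One then extends the modification across each $\calA_i$ by an interpolating diffeomorphism, absorbing any residual twist entirely into the action on $\calA_i$, where $\phi$ is unconstrained by conditions~(1)--(2). The result is a representative of the isotopy class of $\phi$ which is literally periodic on $\overline{\Si\setminus\calA}$, establishing the almost-canonical form.

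For the canonical form I would then reduce $\calC$ to a minimal admissible set by iteratively removing any curve $\calC_i$ whose deletion preserves conditions~(1) and~(2): such a $\calC_i$ sits inside a single periodic orbit whose rotation data on the two sides of $\calA_i$ are compatible, so the annulus $\calA_i$ can be re-absorbed into the enlarged periodic piece without breaking periodicity. The process terminates after finitely many steps since $\calC$ is finite, and the resulting $\calC$ is minimal by construction.
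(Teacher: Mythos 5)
The paper itself gives no proof of \Cref{theo:canonical}; it is quoted as a specialization of Corollary 13.3 of \cite{Farb}, so your overall plan (Nielsen--Thurston input, thicken $\calC$ to annuli, then a minimality reduction) is the intended one. The genuine gap is in your central step, the upgrade from ``$\phi'|_{\Si\setminus\calC}$ freely isotopic to a periodic map $\psi$'' to ``literally periodic on $\overline{\Si\setminus\calA}$''. You propose to alter $\psi$ by a periodic collar diffeomorphism so that its trace on $\partial\calA$ matches that of $\phi'$, on the grounds that the discrepancy is a rotation of each boundary circle. Neither claim is true: $\phi'|_{\partial\calA_i}$ is an arbitrary orientation-preserving circle homeomorphism, in general not of finite order and not conjugate to a rotation, so no finite-order map can agree with it on $\partial\calA$; and composing a periodic $\psi$ with a collar rotation supported near the boundary destroys finite order except for very special angles. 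In addition, once you construct the new automorphism by gluing ($\psi$ on the pieces, an interpolating map on the annuli), the assertion that it still represents $[\phi]$ is precisely what needs proof: different interpolations across $\calA_i$ differ by Dehn twists along the $\calC_i$, so an uncontrolled choice can land in a different mapping class.

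The standard repair is to leave $\psi$ untouched and move $\phi'$ instead: take the free isotopy $H_t$ from $\phi'|_{\overline{\Si\setminus\calA}}$ to $\psi$ on the cut surface, extend it to an ambient isotopy of $\Si$ by damping it out across a collar inside each annulus (isotopy extension), and apply this ambient isotopy to $\phi'$. The resulting automorphism equals $\psi$ on $\overline{\Si\setminus\calA}$, is isotopic to $\phi$ by construction, and all the discrepancy is pushed into the annuli, where conditions (i)--(ii) impose no constraint --- no matching of boundary traces is ever needed. With that substitution your thickening step is fine, and the minimality reduction also goes through, provided you remove curves in whole $\phi$-orbits (not one curve at a time) and re-run the same isotopy argument after each removal so that the enlarged complementary pieces are again literally periodic.
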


    \begin{remark}\label{rem:mod_canonical_form}
        In the case we have a  pseudo-periodic automorphism  of $\Si$ that 
        fixes 
        pointwise some components $\partial^1\Si$ of the boundary $\partial 
        \Si$ we can 
        always find \emph{a canonical form} as follows. We can find an isotopic 
        automorphism $\phi$ relative to $\partial^1 \Si$ that coincides with a 
        canonical form as in the previous theorem outside a collar neighborhood 
        $U$ of 
        $\partial^1 \Si$. We may assume that there exists an isotopy connecting 
        the 
        automorphism and its canonical form relative to $\partial^1\Si$.
    \end{remark}
    
    \begin{block}\label{bloc:amphidrome_curves}
        Let $\phi$ be a pseudo-periodic automorphism in some almost-canonical 
        form. 
        Let $\calC_1, \ldots, \calC_k$ be a subset of curves in $\calC$ that 
        are 
        cyclically permuted by $\phi$, i.e. $\phi(\calC_i)= \calC_{i+1}$ for 
        $i=1,\ldots, k-1$ and $\phi(\calC_k)=C_1$. Suppose that we give an 
        orientation 
        to $\calC_1, \ldots, \calC_k$ so that $\phi|_{\calC_i}$ for $i=1, 
        \ldots, k-1$ 
        is orientation preserving. We say that the curves are {\em amphidrome} 
        if $\phi|_{\calC_k}:\calC_k \to \calC_1$ is orientation reversing. 
    \end{block}
    
    \begin{notation}\label{not:D_an}
        Let $s,c \in \RR$. We denote by $\calD_{s,c}$ the automorphism of 
        $\MS^1 
        \times I$ given by by $(x,t) \mapsto (x+st+c,t)$ (we are taking $\MS^1 
        = \RR/ 
        \ZZ$). Observe that 
        $$\calD_{s,c}\circ \calD_{s',c'} = \calD_{s+s',c+c'},$$
        $$\calD_{s,c}^{-1}=\calD_{-s,-c}.$$
        In this work we always have $s \in \QQ$.
    \end{notation}
    \begin{remark}\label{rem:linear_prep}
        We can isotope
        $\calD_{s,c}$ to a automorphism $\calD^p_{s,c}$ that is periodic on 
        some tubular
        neighborhood of the core 
        curve $\MS^1 \times \{1/2\}$ of the annulus while preserving the action 
        on 
        the boundary $\partial (\MS^1 \times I)$:
        
        \begin{equation}
        \calD^p_{s,c}(x,t)=\left\{
        \begin{array}{ll}
        (x+  3s(t- \frac{1}{3}) + c,t) & 0 \leq t \leq \frac{1}{3} 
        \vspace*{0.1cm} \\
        (x+ c, t)  & \frac{1}{3} \leq t \leq \frac{2}{3}  \vspace*{0.1cm} \\
        (x+3s(t- \frac{2}{3})+c,t)  & \frac{2}{3} \leq t \leq 1
        \end{array}
        \right.
        \end{equation}
    \end{remark}
    \begin{notation}\label{not:amphi_dehn}
        We denote by $\tilde{\calD}_{s}$ the automorphism of $\MS^1 \times I$ 
        given by 
        
        \begin{equation}
        \tilde{\calD}_{s}(x,t)=\left\{
        \begin{array}{ll}
        (-x-3s(t- \frac{1}{3}),1-t) & 0 \leq t \leq \frac{1}{3} \vspace*{0.1cm} 
        \\
        (-x, 1-t)  & \frac{1}{3} \leq t \leq \frac{2}{3}  \vspace*{0.1cm} \\
        (-x-3s(t- \frac{2}{3}),1-t)  & \frac{2}{3} \leq t \leq 1
        \end{array}
        \right.
        \end{equation}
        In this case we only work with $s \in \QQ$ as well.
    \end{notation}
    
    \begin{definition}\label{def:dehn_twist}
        Let $\calC \subset \Si$ be a simple closed curve embedded in an 
        oriented surface
        $\Si$. And let $\calA$ be a tubular neighborhood of $\calC$. Let 
        $\calD: \Si \to
        \Si$ be a automorphism of the surface with
        $\calD|_{\Si \setminus \calA} = \id$.
        We say that $\calD$ is a negative Dehn twist around $\calC$ or a 
        right-handed
        Dehn twist if there exist a parametrization $\eta: \MS^1 \times I 
        \rightarrow \calA$ preserving orientation  such that $$\calD = \eta 
        \circ 
        \calD_{1,0}\circ \eta^{-1}.$$
        
        A positive Dehn twist is defined similarly changing $\calD_{1,0}$ by 
        $\calD_{-1,0}$ in the formula above.
    \end{definition}

    \begin{lemma}[Linearization. Equivalent to Lemma 2.1 in 
        \cite{Ami0}]\label{lem:lin} 
        Let $\calA_i$ be an annulus and let $\phi: \calA_i \to \calA_i$ be a 
        automorphism that does not exchange boundary components. Suppose that 
        $\phi|_{\partial \calA_i}$ is periodic.
        Then, after an isotopy fixing the boundary,
        there exists a parametrization 
        $\eta:\MS^1 \times I\to \calA_i$ such that $$\phi=\eta\circ 
        \calD_{-s,-c}\circ\eta^{-1}$$ for some $s\in \QQ$, some $c\in \RR$.
    \end{lemma}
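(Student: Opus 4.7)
The plan is to bring $\phi$ into the desired linear form in two stages: first reparametrize the annulus so that its two boundary restrictions become rigid rotations, and then absorb any remaining interior twist into the slope parameter $s$ by exploiting the cyclic structure of the mapping class group of the annulus rel boundary.

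First, fix any parametrization $\eta_0: \MS^1 \times I \to \calA_i$. Because $\phi$ does not exchange boundary components and $\phi|_{\partial \calA_i}$ is periodic, each boundary restriction transfers via $\eta_0$ to an orientation-preserving periodic homeomorphism of $\MS^1$, hence is conjugate by some $\gamma_i \in \mathrm{Homeo}^+(\MS^1)$ to the rotation $x \mapsto x + a_i$ with $a_i \in \QQ$ (with denominator dividing the order of the action), for $i = 0, 1$. Since $\mathrm{Homeo}^+(\MS^1)$ is path-connected, I choose a continuous path $(h_t)_{t \in I}$ with $h_0 = \gamma_0$ and $h_1 = \gamma_1$ and set $\eta(x, t) := \eta_0(h_t(x), t)$. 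Writing $\phi' := \eta^{-1} \circ \phi \circ \eta$, a direct check gives $\phi'(x, 0) = (x + a_0, 0)$ and $\phi'(x, 1) = (x + a_1, 1)$.

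Next, set $c := -a_0$ and $s := a_0 - a_1$, both rational. By construction, the linear map $\calD_{-s, -c}$ of \Cref{not:D_an} agrees with $\phi'$ on $\partial (\MS^1 \times I)$, so $\calD_{-s, -c}^{-1} \circ \phi'$ restricts to the identity on the boundary. Since the mapping class group of the annulus relative to its boundary is infinite cyclic and generated by a single Dehn twist, there exists $n \in \ZZ$ with $\calD_{-s, -c}^{-1} \circ \phi'$ isotopic rel $\partial$ to $\calD_{n, 0}$. Using the group law $\calD_{s_1, c_1} \circ \calD_{s_2, c_2} = \calD_{s_1 + s_2, c_1 + c_2}$, this yields that $\phi'$ is isotopic rel $\partial$ to $\calD_{n-s,-c} = \calD_{-s', -c}$ with $s' := s - n \in \QQ$. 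Conjugating back by $\eta$ produces the required isotopy of $\phi$ fixing $\partial \calA_i$ and ending at $\eta \circ \calD_{-s', -c} \circ \eta^{-1}$.

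The main obstacle is the reparametrization step: one must find a single homeomorphism $\eta$ of the whole annulus that simultaneously straightens both boundary restrictions to rotations. Path-connectedness of $\mathrm{Homeo}^+(\MS^1)$ allows the continuous interpolation between the two boundary conjugators, after which rationality of $s$ and $c$ is automatic from the periodicity hypothesis, and the residual interior automorphism is killed, up to sliding $s$ by an integer, by the cyclic structure of $\MCG(\MS^1 \times I, \partial)$.
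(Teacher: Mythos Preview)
Your argument is correct. The two key ingredients---that a periodic orientation-preserving circle homeomorphism is conjugate to a rational rotation, and that $\MCG(\MS^1\times I,\partial)\cong\ZZ$ generated by a Dehn twist---are assembled properly, and the interpolation $\eta(x,t)=\eta_0(h_t(x),t)$ via a path in $\mathrm{Homeo}^+(\MS^1)$ is a clean way to straighten both boundary circles simultaneously. One small remark: you obtain $c\in\QQ$, which is stronger than the $c\in\RR$ in the statement, but of course harmless.

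As for comparison with the paper: the paper does not supply a proof of this lemma at all. It is stated with the annotation ``Equivalent to Lemma~2.1 in \cite{Ami0}'' and simply imported from that reference. So you have provided a self-contained argument where the paper defers to the literature; there is nothing further to compare.
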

    
    \begin{lemma}[Specialization.  Equivalent to Lemma 2.3 in 
        \cite{Ami0}]\label{lem:spe} 
        Let $\calA_i$ be an annulus and let $\phi: \calA_i \to \calA_i$ be a 
        automorphism that exchanges boundary components. Suppose that 
        $\phi|_{\partial \calA_i}$ is periodic.
        Then after an isotopy fixing the boundary,
        there exists a parametrization $\eta:\MS^1 \times 
        I\to \calA_i$ such that $$\phi=\eta\circ 
        \tilde{\calD}_{-s}\circ\eta^{-1}$$ for 
        some $s\in \QQ$.
    \end{lemma}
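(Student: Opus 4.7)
The plan is to mirror the proof of Linearization (Lemma~\ref{lem:lin}) by first passing to the square $\phi^2$ and then reconstructing $\phi$ as a square root. Since $\phi$ swaps the two boundary circles of $\calA_i$, its square $\phi^2$ preserves each boundary component, is orientation preserving, and still has periodic boundary action; I would invoke Lemma~\ref{lem:lin} on $\phi^2$ to produce a parametrization $\eta_0\colon \MS^1\times I\to\calA_i$ and constants $\sigma\in\QQ$, $\gamma\in\RR$ with $\phi^2 = \eta_0\circ\calD_{-\sigma,-\gamma}\circ\eta_0^{-1}$ up to isotopy fixing the boundary.

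Next, set $\psi=\eta_0^{-1}\circ\phi\circ\eta_0$. Orientation preservation on the annulus together with the swap of boundary components forces the boundary action to be of the form $\psi(x,0)=(-x+a,1)$ and $\psi(x,1)=(-x+b,0)$ for some constants $a,b\in\RR$ (using that periodic orientation-reversing diffeomorphisms of $\MS^1$ are, after reparametrization, linear reflections). Computing $\psi^2$ on each boundary circle then yields the two relations $b-a=-\gamma$ and $a-b=-\sigma-\gamma$, which together pin down the rigid constraint $\gamma=-\sigma/2$. Setting $s=\sigma/4$ so that $\gamma=-2s$, the claim becomes that $\psi$ is isotopic rel boundary to $\tilde{\calD}_{-s}$.

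To match boundary actions I would compose $\eta_0$ with a rotation $(x,t)\mapsto(x+r,t)$ of the $\MS^1$ factor and take $r=(a+s)/2$; a direct check shows the new $\psi$ then coincides with $\tilde{\calD}_{-s}$ on $\partial(\MS^1\times I)$. Since the mapping class group of the annulus rel boundary is infinite cyclic, generated by $\calD_{1,0}$, we obtain $\psi\sim\calD_{k,0}\circ\tilde{\calD}_{-s}$ rel boundary for some $k\in\ZZ$. I would then square this relation and use the conjugation identity $\tilde{\calD}_{-s}\circ\calD_{k,0}\circ\tilde{\calD}_{-s}^{-1}\sim\calD_{k,-k}$ (verifiable directly on the two boundary circles) together with $\tilde{\calD}_{-s}^2\sim\calD_{-4s,2s}$ to get $\psi^2\sim\calD_{2k-4s,-k+2s}$; comparison with the already-known $\psi^2\sim\calD_{-4s,2s}$ forces $k=0$, so $\psi\sim\tilde{\calD}_{-s}$ rel boundary, as desired.

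The main obstacle will be this last step, namely verifying that the residual Dehn twist $k$ is automatically killed. It is precisely the rigid relation $\gamma=-\sigma/2$, itself a structural consequence of $\phi^2$ being the square of an amphidrome orientation-preserving map, that reduces the relevant piece of the mapping class group to a single parameter and thereby guarantees that the one-parameter family $\{\tilde{\calD}_{-s}\}_{s\in\QQ}$ exhausts all isotopy classes of amphidrome automorphisms with periodic boundary action. The rest is bookkeeping analogous to that used in the proof of Lemma~\ref{lem:lin} in \cite{Ami0}.
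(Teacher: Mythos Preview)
The paper does not supply its own proof of this lemma; it is stated with the tag ``Equivalent to Lemma~2.3 in \cite{Ami0}'' and left without argument, so there is nothing to compare your approach against within the paper itself. Your square-root strategy---linearize $\phi^2$ via Lemma~\ref{lem:lin}, then identify $\phi$ among the square roots of $\calD_{-\sigma,-\gamma}$ using that $\MCG(\MS^1\times I,\partial)$ is infinite cyclic---is the natural line of attack and your arithmetic with the parameters $(\sigma,\gamma,s,a,b,k)$ checks out: the boundary relations do force $\gamma=-\sigma/2$, the rotation by $r=(a+s)/2$ does align the boundary actions, and squaring $\calD_{k,0}\circ\tilde\calD_{-s}$ does yield a twist with $s$-parameter $2k-4s$, so the comparison with $\psi^2\sim\calD_{-4s,2s}$ kills $k$.

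One point deserves more care. You assert that in the parametrization $\eta_0$ produced by Lemma~\ref{lem:lin} for $\phi^2$, the boundary action of $\phi$ itself is of the form $x\mapsto -x+a$. Lemma~\ref{lem:lin} only guarantees that $\phi^2|_\partial$ is a rotation in these coordinates; it says nothing about $\phi|_\partial$. What you need is a parametrization that \emph{simultaneously} makes $\phi|_\partial$ a linear reflection and $\phi^2$ (after isotopy rel boundary) a linear twist. This can be arranged---choose the $\MS^1$-coordinate first so that the periodic map $\phi|_\partial$ is linear, observe that $\phi^2|_\partial$ is then automatically a rotation, and then run the linearization argument for $\phi^2$ relative to this fixed boundary parametrization---but you should say so explicitly rather than relegating it to a parenthetical. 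Once that is done, your argument is complete.
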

    
    \begin{remark}\label{re:lin} In the case $\phi|_{\partial \calA_i}$ is the 
        identity, we have that $$\phi= \eta\circ \calD_{s,0}\circ\eta^{-1}$$
        for some $s\in \ZZ$, that is $\phi=\calD^s$ for some right-handed Dehn 
        twist as
        in Definition 
        \ref{def:dehn_twist}. 
    \end{remark}

    \begin{definition}[Screw number]\label{def:screw_number}
        Let $\phi$ be a pseudo-periodic automorphism as in 
        \Cref{theo:canonical}. Let 
        $n$ be the order of $\phi|_{\Si \setminus \calA}$. By \Cref{re:lin}, 
        $\phi^n|_{\calA_i}$ equals $\calD|_{\calA_i}^{s_i}$ for a certain $s_i 
        \in \ZZ$.
        
        Let $\alpha$ be the length of the orbit in which $\calA_i$ lies and let 
        $\tilde{\alpha}\in\{\alpha, 2\alpha\}$ be the smallest number such that 
        $\phi^{\tilde{\alpha}}$ does not exchange the boundary components of 
        $\calA_1$. 
        We define $$s(\calA_i):=\frac{-s_i}{n} \tilde\alpha.$$ We call 
        $s(\calA_i)$ the 
        {\em screw number} of $\phi$ at $\calA_i$ or at $\calC_i$.
    \end{definition}
    
    \begin{remark}\label{rem:screw_indep}
        Our \Cref{def:screw_number} coincides with \cite[Definition 2.4]{Ami0}. The original definition is due to Nielsen [\cite{Niel}. 
        Sect. 
        12] and it does not depend on a canonical form for $\phi$. Since we are 
        restricting to automorphisms that do not exchange boundary components 
        of the 
        annuli $\calA$, our definition is a bit simpler. 
    \end{remark}

    \begin{lemma}\label{lem:linear_ex} Let $\phi$ be a automorphism as in 
        \Cref{theo:canonical} and let  $\{\calA_i\} \subset \calA$ be a set of 
        $k$ 
        annuli cyclically permuted by $\phi$, i.e. $\phi(\calA_i) = 
        \calA_{i+1}$ such 
        that $\phi^k$ does not exchange boundary components. Then there exist 
        coordinates $$\eta_{i}:\MS^1 \times I\to \calA_i$$ for the annuli in 
        the orbit 
        such that 
        $$ \eta_{j+1}^{-1}\circ\phi\circ\eta_{j}=\calD_{-s/k,-c/k} $$ where $s$ 
        and $c$ 
        are associated to $\calA_1$ as in \Cref{lem:lin}. 
    \end{lemma}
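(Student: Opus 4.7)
The plan is to obtain a good parametrization of $\calA_1$ from the Linearization Lemma applied to $\phi^k|_{\calA_1}$ and then transport it around the orbit by push-forward along $\phi$ itself, so that the cyclic symmetry of the orbit is built in from the start.

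First, since $\phi^k$ is a self-map of $\calA_1$ that does not exchange its boundary components and whose boundary restriction is periodic (as it comes from the periodic map $\phi|_{\overline{\Si\setminus\calA}}$), I can invoke \Cref{lem:lin} to produce a parametrization $\eta_1:\MS^1\times I\to\calA_1$ and numbers $s\in\QQ$, $c\in\RR$ with
\[
\eta_1^{-1}\circ\phi^k\circ\eta_1 \;=\; \calD_{-s,-c}.
\]
Let $\psi := \calD_{-s/k,-c/k}$ be the desired model map on $\MS^1\times I$; by the composition rule in \Cref{not:D_an}, $\psi^k=\calD_{-s,-c}$.

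For $j=2,\ldots,k$, I then simply define
\[
\eta_j \;:=\; \phi^{\,j-1}\circ\eta_1\circ\psi^{-(j-1)}.
\]
Each $\eta_j$ is an orientation-preserving parametrization of $\calA_j$ because $\phi^{j-1}$ maps $\calA_1$ to $\calA_j$ preserving orientation, while $\psi^{-(j-1)}$ is a self-map of the model annulus. The desired relation $\eta_{j+1}^{-1}\circ\phi\circ\eta_j=\psi$ for $j=1,\ldots,k-1$ is then immediate by telescoping:
\[
\phi\circ\eta_j \;=\; \phi^{\,j}\circ\eta_1\circ\psi^{-(j-1)} \;=\; \bigl(\phi^{\,j}\circ\eta_1\circ\psi^{-j}\bigr)\circ\psi \;=\; \eta_{j+1}\circ\psi.
\]

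The only step that is not tautological is verifying that the construction closes up consistently, i.e.\ that the same formula for $j=k$ (interpreting $\eta_{k+1}$ as $\eta_1$) also yields $\phi\circ\eta_k=\eta_1\circ\psi$. This is exactly where the normalization $s/k$, $c/k$ was forced upon us: one computes
\[
\phi\circ\eta_k \;=\; \phi^{k}\circ\eta_1\circ\psi^{-(k-1)} \;=\; \eta_1\circ\calD_{-s,-c}\circ\psi^{-(k-1)} \;=\; \eta_1\circ\psi^{k}\circ\psi^{-(k-1)} \;=\; \eta_1\circ\psi,
\]
using first the defining property of $\eta_1$ and then $\psi^k=\calD_{-s,-c}$. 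I do not expect any serious obstacle; the whole content of the lemma is the matching of exponents just displayed, which explains why the screw data on $\calA_1$ must be divided equally among the $k$ annuli of the orbit in order to be realized simultaneously by a single map $\phi$.
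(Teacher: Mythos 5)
Your proof is correct and follows essentially the same route as the paper: the paper defers this lemma to \cite{Bob}, but its own proof of the analogous amphidrome case (\Cref{lem:special_ex}) uses exactly your construction, namely linearize $\phi^k$ on $\calA_1$ via \Cref{lem:lin} and push the parametrization around the orbit by $\eta_j=\phi^{j-1}\circ\eta_1\circ\calD_{s(j-1)/k,\,c(j-1)/k}$, with the only nontrivial check being the closing-up condition at $j=k$, which you verify correctly using $\calD_{-s/k,-c/k}^{\,k}=\calD_{-s,-c}$.
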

    \begin{proof}
        See \cite[Lemma 6.17]{Bob}.
    \end{proof}
    
    \begin{remark}\label{rem:mod_linear_ex}
        By  \Cref{rem:linear_prep} we can substitute $\calD_{-s/k,-c/k}$ by
        $\calD^p_{-s/k,-c/k}$ in the previous lemma.
    \end{remark}

    \begin{lemma}\label{lem:special_ex} Let $\phi$ be a automorphism as in 
        \Cref{theo:canonical} and let  $\{\calA_i\} \subset \calA$ be a set of 
        $k$ 
        annuli cyclically permuted by $\phi$, i.e. $\phi(\calA_i) = 
        \calA_{i+1}$ such 
        that $\phi^k$ exchanges boundary components. Then there exist 
        coordinates 
        $$\eta_{i}:\MS^1 \times I\to \calA_i$$ for the annuli in the orbit such 
        that 
        $$ \eta_{j+1}^{-1}\circ\phi\circ\eta_{j}=\tilde{\calD}_{-s/\alpha} $$ 
        where $s$ 
        is associated to $\calA_1$ as in \Cref{lem:spe}. 
    \end{lemma}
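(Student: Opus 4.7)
The plan is to mirror the inductive argument of \Cref{lem:linear_ex} with the amphidrome model map $\tilde{\calD}$ in place of $\calD$, and reduce the whole statement to a single algebraic identity about powers of $\tilde{\calD}_r$ on $\MS^1 \times I$.

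I would start by invoking \Cref{lem:spe} on the restricted automorphism $\phi^k|_{\calA_1}$. This is legitimate because $\phi$ is in almost-canonical form, so $\phi^k|_{\partial\calA_1}$ is periodic, and by hypothesis $\phi^k$ exchanges the two boundary components of $\calA_1$. The specialization lemma therefore supplies a parametrization $\eta_1 : \MS^1 \times I \to \calA_1$ and a rational $s$ with
\[\phi^k|_{\calA_1} = \eta_1 \circ \tilde{\calD}_{-s} \circ \eta_1^{-1}.\]
Fixing this $\eta_1$ and writing $\psi := \tilde{\calD}_{-s/\alpha}$, I would then define the remaining parametrizations recursively by $\eta_{j+1} := \phi \circ \eta_j \circ \psi^{-1}$ for $j = 1, \ldots, k-1$, which by construction forces $\eta_{j+1}^{-1}\circ \phi \circ \eta_j = \psi$ for all these $j$.

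The only remaining condition is that the recursion close up consistently at the end of the cycle, namely $\phi \circ \eta_k \circ \psi^{-1} = \eta_1$. Unwinding the recurrence, this is equivalent to $\phi^k|_{\calA_1} = \eta_1 \circ \psi^k \circ \eta_1^{-1}$, which combined with the identity above obtained from \Cref{lem:spe} reduces the whole lemma to verifying
\[\tilde{\calD}_{-s/\alpha}^{\,k} = \tilde{\calD}_{-s}\]
as self-automorphisms of $\MS^1 \times I$, with $\alpha$ the orbit length as in \Cref{def:screw_number}.

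The main obstacle is thus this single identity, which I would verify by direct piecewise computation using the formula in \Cref{not:amphi_dehn}. A short calculation first shows that $\tilde{\calD}_r \circ \tilde{\calD}_r = \calD^{p}_{2r,0}$ in the notation of \Cref{rem:linear_prep}: the middle region of $\tilde{\calD}_r$ is an involution whose square is the identity, and the two outer regions combine their linear parts additively. An induction on $m$ then yields $\tilde{\calD}_r^{\,2m+1} = \tilde{\calD}_{(2m+1)r}$ by composing $\tilde{\calD}_r$ with $\calD^{p}_{2mr,0}$ in each of the three pieces. Substituting $r = -s/\alpha$ with $\alpha = k$ gives the required identity. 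The parametrizations $\eta_j$ produced by the recursion are smooth, orientation-preserving, and compatible with the prescribed action of $\phi$ on the boundaries by construction, exactly as in the linearized case treated in \cite{Bob}.
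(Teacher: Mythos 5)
Your construction is the same as the paper's: invoke \Cref{lem:spe} for $\phi^k|_{\calA_1}$ to get $\eta_1$ and $s$, define the remaining coordinates by the recursion $\eta_{j+1}=\phi\circ\eta_j\circ\tilde{\calD}_{s/k}$ (your $\psi^{-1}=\tilde{\calD}_{s/\alpha}$ is exactly the paper's twist), and reduce the whole lemma to the wrap-around identity $\eta_1^{-1}\circ\phi^k\circ\eta_1=\tilde{\calD}_{-s/\alpha}^{\,k}$, i.e.\ to $\tilde{\calD}_{-s/\alpha}^{\,k}=\tilde{\calD}_{-s}$. Your auxiliary computations are correct: $\tilde{\calD}_a\circ\tilde{\calD}_b=\calD^p_{a+b,0}$, hence $\tilde{\calD}_r^{\,2m}=\calD^p_{2mr,0}$ and $\tilde{\calD}_r^{\,2m+1}=\tilde{\calD}_{(2m+1)r}$. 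In this respect you are more careful than the paper itself, whose proof implicitly uses $\tilde{\calD}_{s/k}^{\,j-1}=\tilde{\calD}_{s(j-1)/k}$, an identity that holds only when the exponent is odd.

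The gap is in your final sentence. The identity you reduce everything to is proved by you (and is true) only for odd $k$: for even $k$ your own formulas give $\tilde{\calD}_{-s/k}^{\,k}=\calD^p_{-s,0}$, which preserves each boundary circle of $\MS^1\times I$, whereas $\tilde{\calD}_{-s}$ exchanges them, so the required equality fails. Moreover no choice of coordinates can repair the cyclic statement when $k$ is even: if $\eta_{j+1}^{-1}\circ\phi\circ\eta_j=\tilde{\calD}_{-s/k}$ held for all $j$ modulo $k$, then $\eta_1^{-1}\circ\phi^k\circ\eta_1$ would be an even power of a boundary-swapping annulus map and hence would preserve the boundary circles, contradicting the hypothesis that $\phi^k$ exchanges them. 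Even-length amphidrome orbits are not excluded by the hypotheses (e.g.\ an order-four rotation of a four-holed sphere with two annuli attached along the two opposite pairs of holes gives a $k=2$ amphidrome orbit), so you must either make the parity restriction explicit and justify it in the situation where the lemma is applied, or treat the wrap-around index separately when $k$ is even. To be fair, the paper's own proof is silent on exactly this point (its last displayed equality is likewise only correct for odd $k$), so your write-up exposes the issue rather than hiding it; but as written, ``substituting $r=-s/\alpha$ with $\alpha=k$'' does not deliver the identity for every $k$ allowed by the statement.
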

    
    \begin{proof}
        Take a parametrization of $\calA_1$ for $\phi^k:\calA_1 
        \rightarrow \calA_1$ as in \Cref{lem:spe}, say  $\eta_1:\MS^1 \times 
        I\to \calA_1$.
        Define recursively $\eta_j:=\phi\circ\eta_{j-1}\circ 
        \tilde{\calD}_{s/k}$ (see 
        \Cref{not:D_an}).
        Then, we have
        $$\eta_{j+1}^{-1}\circ\phi\circ\eta_{j}=\tilde{\calD}_{-s/k}.$$
        Since for every $j$ we have that $\eta_j=\phi^{j-1}\circ\eta_1\circ 
        \tilde{\calD}_{s(j-1)/k}$ we have also that 
        $$\eta_1^{-1}\circ\phi\circ 
        \eta_{\alpha}=\eta_1^{-1}\circ\phi\circ\phi^{k-1}\circ\eta_1\circ 
        \tilde{\calD}_{s(k-1)/k}=\tilde{\calD}_{-s/k}.$$
    \end{proof}
    \begin{remark}\label{re:sk} 
        After this proof we can check that $\eta_k^{-1} 
        \circ\phi^\alpha\circ\eta_k=\calD_{-s,-c}$ to see that the screw number 
        $s=s(\calA_i)$ and the parameter $c$ modulo $\ZZ$ of 
        \Cref{lem:linear_ex} only 
        depend on the orbit of $\calA_i$. 
        
        We observe also that the numbers $s$ and $c$ of \Cref{lem:linear_ex} 
        satisfy
        \begin{itemize}
            \item $s$ equals $s(\calA_i)$ and 
            \item $c$ is only determined modulo $\ZZ$ and equals the usual 
            rotation number
            $\rot (\phi^{\alpha_i}|_{\eta(\MS^1 \times \{0\})})\in (0,1]$. 
        \end{itemize} 
        This is also observed in Corollary 2.2 in \cite{Ami0}.
    \end{remark}

    \begin{definition}\label{def:boundary_dehn}
        Let $C$ be a component of $\partial \Si$ and let $\calA$ be a compact 
        collar 
        neighborhood of $C$ in $\Si$. Suppose that $C$ has a metric and total 
        length  
        is equal to $\ell$. Let $\eta:\MS^1 \times I\to \calA$ be a 
        parametrization of
        $\calA$, 
        such that  $\eta|_{\MS^1 \times \{1\}}:\MS^1 \times \{1\} \to C$ is an 
        isometry.
        
        Suppose that $\MS^1$ has the metric induced from taking $\MS^1 = \RR/ 
        \ell\ZZ$ 
        with $\ell \in \RR_{>0}$ and the standard metric on $\RR$. A 
        \emph{boundary Dehn twist} of length $r \in \RR_{>0}$ along $C$ is a 
        automorphism 
        $\calD^\eta_{r} 
        (C)$ of $\Si$ such that:
        \begin{enumerate}
            \item it is the identity outside $\calA$
            \item the restriction of $\calD^\eta_{r} (C)$ to $\calA$ in the 
            coordinates
            given by 
            $\eta$ is given by $(x,t) \mapsto ( x+ r\cdot t, t).$
        \end{enumerate}
        The isotopy type of $\calD^\eta_{r} (C)$ by isotopies fixing the action 
        on 
        $\partial \Si$ does not depend on the parametrization $\eta$. When we 
        write just $\calD_{r}(C)$, it means that we are considering a boundary 
        Dehn twist 
        with 
        respect to {\em some} parametrization $\eta$.
    \end{definition}
    
    \begin{remark}\label{re:g1}
        Given a automorphism $\phi$ of a surface $\Si$ with $\partial \Si\neq 
        \emptyset$. Let $C$ be a connected component of $\partial \Si$ such that
        $\phi|_{C}$ is a
        rotation by $c \in [0,1)$. Let $\calA$ be a 
        compact collar neighborhood of $C$ (isomorphic to $I\times C$) in 
        $\Si$.  Let 
        $\eta:\MS^1 \times I\to \calA$ be a parametrization of $\calA$, with 
        $\phi(\MS^1
        \times 
        \{1\})=C$. Up to isotopy, we can assume that the restriction of $\phi$ 
        to
        $\calA$ satisfies 
        $$
        \eta^{-1}\circ\phi|_{\calA}\circ\eta(x,t)=(x+c,t).
        $$ 
    \end{remark}

    \section{Pure \tat graphs}\label{sec:pure_tat}
    
    In this section we recall some definitions and conventions from \cite{Bob}.

    \begin{block}
        A graph $\G$ is a $1$ dimensional finite CW-complex; unless otherwise 
        specified a graph doesn't have univalent vertices. A ribbon graph is a 
        graph equipped with a cyclic ordering of the edges adjacent to each 
        vertex.    With a ribbon graph, 
        one can recover the topology of an orientable surface with boundary, we 
        call this surface the {\em thickening} of $\G$. A metric ribbon graph 
        is a 
        ribbon 
        graph with a metric on each of its edges.
        
        A relative metric ribbon graph (see \cite[Definition 2.3]{Bob} for the notion of relative ribbon graph and beginning of Section 3. therein) is a pair $(\G,A)$ with $A \subset \G$ a 
        subgraph formed by a disjoint union of circles with the property that 
        for each connected 
        component $A_i \subset A$, there exists a boundary component on the 
        thickening 
        $\Si$ of $\G$ such that it retracts to $A_i$. The relative thickening 
        of $(\G,A)$ is the thickening of $\G$ minus the cylinders corresponding 
        to the  boundary components that retract to $A$. In particular, the 
        relative thickening, also denoted by $\Si$ contains $A$ as boundary.
    \end{block}

    \begin{definition}(\cite[Definition 3.2]{Bob} Safe walk)\label{def:safe}
        Let $(\G,A)$ be a metric relative ribbon graph. A safe walk for a point 
        $p$ in 
        the interior of some edge is a path
        $\gamma_p:\RR_{\geq 0} \rightarrow \Gamma$ with $\gamma_p(0)=p$ and 
        such that:
        
        \begin{enumerate}
            \item[(1)] The absolute value of the speed $|\gamma_p'|$ measured 
            with the 
            metric of $\G$ is constant and equal to  $1$. Equivalently, the 
            safe walk is 
            parametrized by arc length, 
            i.e. for $s$ small enough $d(p,\gamma_p(s))=s$. 
            \item[(2)] when $\gamma_p$ gets to a vertex, it continues along the 
            next edge in the given cyclic order.
            \item[(3)] If $p$ is in an edge of $A$, the walk $\gamma_p$ starts 
            running in 
            the opposite direction to the one indicated by $A$ seen as boundary 
            of $\Si$.
        \end{enumerate}
        An $\ell$-safe walk is the restriction of a safe walk to the interval 
        $[0,\ell]$.
        If a length is not specified when referring to a safe walk, we will 
        understand 
        that its length is $\pi$.
    \end{definition}
    
    The notion in (2) of \emph{continuing along the next edge in the order} of 
    $e(v)$ is equivalent to the notion of \emph{turning to the right} in every 
    vertex for paths
    parallel to $\G$ in $\Si$ in A'Campo's words in \cite{Camp1}.
    
    \begin{definition}(\cite[Definition 3.5]{Bob} \Tat property) \label{def:tat}
        Let $(\G,A)$ be relative metric ribbon graph. 
        We say that $\G$ satisfies the \emph{$\ell$-\tat property}, or that 
        $\G$ is an 
        \emph{$\ell$-\tat graph} if
        
        \begin{enumerate}[label=(\arabic*)]
            \item For any point $p\in \G\setminus (A\cup v(\G))$ the two 
            different 
            $\ell$-safe walks starting at $p$, that we denote 
            by $\gamma_p, \omega_p$, satisfy $\gamma_p(\ell)=\omega_p(\ell)$.
            \item for a point $p$ in $A\setminus v(\G)$, the end point of the 
            unique 
            $\ell$-safe walk starting at $p$ belongs to $A$.
        \end{enumerate}
        
        In this case, we say that $(\G,A)$ is a \emph{relative $\ell$-\tat 
        graph}. If $A=\emptyset$, we call it a \emph{pure} $\ell$-\tat 
        structure or graph. 
        If $\ell=\pi$ we just call it pure \tat structure or graph.
    \end{definition}
    
    \begin{figure}[H]
        \includegraphics[width=80mm]{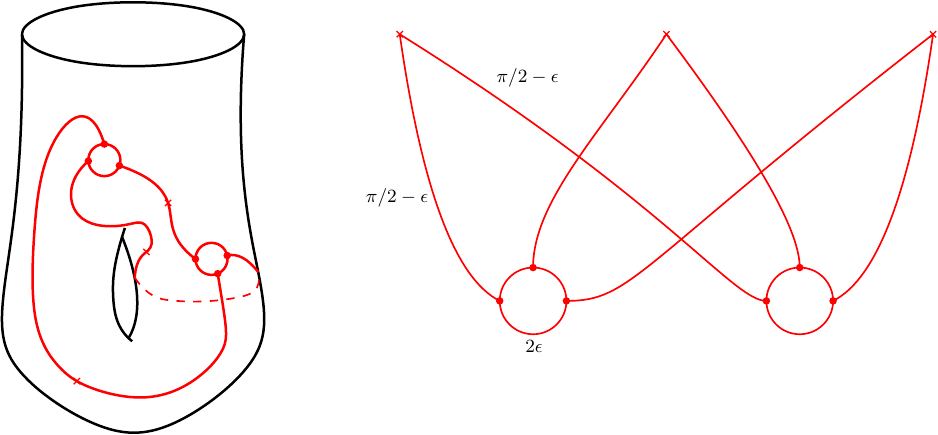}%
        \caption{An example of a relative \tat graph. It has $2$ connected 
            components in $A$ (the two small circles). The length of an edge in 
            $A$ is $2\epsilon$ and the
            length of and edge from a vertex in $A$ to a vertex depicted as a 
            cross is 
            $\pi/2 - \epsilon$.}
        \label{fig:example_relative_tat}
    \end{figure}
    
    \begin{notation}\label{not:cylinders_notation}
        Let $(\G,A) \hookrightarrow (\Si, \partial \Si)$ be a relative ribbon 
        graph 
        properly embedded in its thickening.  Let $\Si_{\G} $ be the surface 
        that
        results 
        from cutting $\Si$ along $\G$, then $\Si_{\G}$ consists of as many 
        cylinders as 
        there are connected components in $\partial \Si \setminus A$. We denote 
        these 
        cylinders by $\tilde\Si_1, \ldots, \tilde\Si_r$.
        
        Let $g_\G: \Si_\G \rightarrow \Si$
        be the gluing map.  We denote by $\widetilde \G_i$ the boundary 
        component of the cylinder $\Si_i$ that comes 
        from the graph (that is $g_\G(\widetilde \G_i)\subset \G$) and by $C_i$ 
        the one 
        coming from a boundary component of $\Si$ (that is $g_\G(C_i)\subset 
        \partial 
        \Si$). From now on, we take the convention that $C_i$ is identified 
        with $C_i 
        \times \{1\}$ and that $\TG_i$ is identified with $C_{i} \times \{0\}$.
        We set $\Si_i:=g_\G(\widetilde{\Si}_i)$ and 
        $\G_i:=g_\G(\widetilde{\G}_i)$. 
        Finally we denote $g_\G(C_i)$ also by $C_i$ since $g_\G|_{C_i}$ is 
        bijective.
        
        A {\em retraction} or a {\em product structure} for a component $\Si_i$ 
        is a 
        parametrization $$r_i: \MS^1 \times I \rightarrow \Si_i.$$ For each 
        $\theta \in 
        \MS^1$, we call $r_i(\{\theta\} \times I)$ a {\em retraction line.} We 
        also say 
        that $g_\G(r_i(\{\theta\} \times I))$ is a retraction line.
    \end{notation}

    \begin{block}\label{bloc:tat_automorphism} The following is a particular case of \cite[Definition 5.4]{Bob}
      A relative \tat graph $(\G,A) \hookrightarrow (\Si, \partial A)$ 
      has an associated mapping class $[\phi_\G]_{\partial \Si \setminus A, \id}$ on $\Si$, 
       more  specifically, an element of $\MCG(\Si, \partial \Si \setminus A)$. If a 
       product  structure is specified, then an explicit representative $\phi_\G$ is 
        induced.  For any product structure $\phi_\G$ satisfies:
        
        \begin{enumerate}[label=(\arabic*)]
            \item $\phi_\G|_{\G}(p) = \gamma_p(\pi)$, that is, it induces on 
            the graph the 
            same action as the \tat property.
            
            \item \label{it:prop_ii} the mapping class $[\phi_\G] \in 
            \MCG(\Si)$ is of
            finite order, we also say it is periodic.
            
            \item \label{it:prop_iii} the fractional Dehn twist coefficients 
            $t_i/n$ (recall \ref{bloc:rotation_numbers}) along all boundary 
            components in $\Si 
            \setminus A$ 
            are strictly positive.
        \end{enumerate}
        
        Actually, in \cite{Bob} it is proven that \ref{it:prop_ii} and 
        \ref{it:prop_iii} above characterize \tat automorphisms, more 
        concretely the following is proven:
        
        \begin{thm}\label{thm:characterization_tat} 
            Let $\phi: \Si \to \Si$ be an automorphism of a surface with 
            $\phi|_{\partial^1 \Si} = \id$ for some non-empty union of components    
            $\partial^1 \Si \subset \partial \Si$.
            Then there exists a relative \tat graph $(\G, \partial \Si 
            \setminus \partial^1 \Si)$
            with $[\phi_\G]_{\partial^1 \Si} = [\phi]_{\partial^1 \Si}$ if and 
            only if 
            $[\phi] \in \MCG(\Si)$ is of finite order and all the fractional 
            Dehn twists at 
            boundary components in $\partial^1 \Si$ are strictly positive.
        \end{thm}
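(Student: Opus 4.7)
The forward implication is essentially immediate from properties (ii) and (iii) collected in \ref{bloc:tat_automorphism}, which record that any relative \tat graph induces a finite-order mapping class with strictly positive fractional Dehn twist coefficients on each component of $\partial^1\Si$. I would state this as a quick observation and concentrate on the converse.

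For the converse, my plan is to work in the orbifold quotient of $\Si$ by a periodic representative of $[\phi]$. Since $[\phi]$ has finite order $n$ in $\MCG(\Si)$, the canonical-form analysis of \Cref{theo:canonical} and \Cref{rem:mod_canonical_form} lets me replace $\phi$ in $\MCG(\Si,\partial^1\Si)$ by a representative $\psi$ which is periodic of order $n$ outside a collar $U$ of $\partial^1\Si$, and whose restriction to $U$ is modelled by the boundary rotation of \ref{re:g1} with rotation number $t_i/n\in(0,1)$ on each $C_i\subset\partial^1\Si$. Let $q:\Si\setminus U\to O$ be the orbifold quotient by $\langle\psi\rangle$, a branched $n$-fold cover whose cone points are images of fixed points of powers of $\psi$ and whose boundary circles come either from orbits in $\partial^1\Si$ or from orbits in $\partial\Si\setminus\partial^1\Si$.

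I would then build a metric ribbon graph $\bar\G\subset O$ which is a deformation retract of $O$ relative to the boundary circles coming from $\partial\Si\setminus\partial^1\Si$, with all cone points placed at vertices, and assign edge lengths so that: (a) each edge meeting a $\partial^1$-boundary of $O$ has length making its lift in $\Si$ contribute exactly the rotation angle $t_i/n$ along $C_i$; (b) every remaining edge-length is chosen so that each right-turn boundary walk around a face of the thickening of $\bar\G$ has total length $\pi$. Setting $\G:=q^{-1}(\bar\G)$ (extended radially across $U$ to meet each $C_i$), together with $A:=\partial\Si\setminus\partial^1\Si$, I claim $(\G,A)$ is a relative \tat graph. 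The final step is to compare $\phi_\G$ with $\psi$: using the explicit models \ref{not:D_an} and \ref{re:g1}, one trivializes the cylinders $\Si_i$ of \ref{not:cylinders_notation} compatibly with the boundary action, upgrading the agreement of $\phi_\G$ and $\psi$ from a free isotopy to an isotopy rel $\partial^1\Si$, so that $[\phi_\G]_{\partial^1\Si}=[\phi]_{\partial^1\Si}$.

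The main technical obstacle is the simultaneous feasibility of the length conditions (a) and (b): one must show that a ribbon-graph deformation retract $\bar\G$ of $O$ exists admitting a positive-length metric in which the prescribed boundary-edge lengths coexist with the face-length equations. This is an Euler-characteristic count on $O$, and the hypothesis $t_i/n>0$ is precisely what makes the prescribed boundary-edge lengths strictly positive, so the remaining system of linear constraints for (b) always admits a positive solution after a possible subdivision of $\bar\G$. Were some $t_i/n$ zero or negative, no $\pi$-safe walk could traverse $U$ in the direction forced by the orientation of $\partial^1\Si$.
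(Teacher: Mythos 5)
Your outline follows the same overall strategy as the source on which the paper relies: note first that the paper does not prove \Cref{thm:characterization_tat} at all, but imports it from \cite[Theorem 5.27]{Bob}, and the quotient-and-pullback technique you propose (pass to the orbit surface of a periodic representative, build a metric spine there containing the branch points and the relative boundary, pull it back through the branched cover) is exactly the technique the paper itself sketches in the proof of \Cref{lem:tech_tat}. Your forward implication, quoting the properties collected in \ref{bloc:tat_automorphism}, is fine (those properties are themselves quoted from \cite{Bob}, just as the paper does).

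As a standalone proof of the converse, however, there is a genuine gap precisely where the real work lies. The existence of a metric ribbon spine of the quotient satisfying your conditions (a) and (b) is the substantive content of \cite[Theorem 5.27]{Bob}; asserting it via ``an Euler-characteristic count'' and ``a positive solution after subdivision'' does not establish it, and the conditions you write down are not the correct ones. Condition (b) --- every face of the thickening of $\bar\G$ has boundary length $\pi$ --- is wrong: since $q$ is a local isometry, a $\pi$-safe walk upstairs descends to a $\pi$-safe walk downstairs that must close up, so what is needed is that $\pi$ be an integer multiple of each face length, with the multiplicity, and the placement of the cone points inside the faces, matching the local rotation of the prescribed deck transformation; otherwise $q^{-1}(\bar\G)$ realizes \emph{some} periodic class, not $[\phi]$. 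Likewise, near $\partial^1\Si$ you only prescribe a rotation number $t_i/n\in(0,1)$, but the fractional Dehn twist coefficient of \ref{bloc:rotation_numbers} is a positive rational that may well exceed $1$; only its reduction mod $1$ is a rotation number, and the full coefficient must be encoded in the lengths of the boundary walk at $C_i$, since matching these coefficients is exactly what your last step (upgrading agreement in $\MCG(\Si)$ to agreement in $\MCG(\Si,\partial^1\Si)$) requires. Finally, a smaller slip: a relative \tat graph meets $\partial\Si$ only in $A=\partial\Si\setminus\partial^1\Si$, so extending $\G$ ``radially across $U$ to meet each $C_i$'' is not allowed; the components $C_i$ must stay at positive distance from $\G$, separated from it by the cylinders of \ref{not:cylinders_notation}.
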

        
        This theorem is a particular case of Theorem 5.27 (see also  5.14 therein) in \cite{Bob}. Actually it is a 
        consequence of 
        that Theorem since there, the authors consider also negative fractional 
        Dehn twists and also {\em negative} safe walks (which turn left instead 
        of turning right). This is done via a {\em sign} map $\iota: \partial^1 
        \Si \to \{+,-,0\}$.
        
        In this work we only use the original notion of A'Campo, so this map is 
        constant $+$.
        
    \end{block}

    \section{Mixed \tat graphs}\label{sec:mixed}

    With pure, relative and general \tat graphs we model periodic 
    automorphisms. 
    Now we extend the notion of \tat graph to be able to model some 
    pseudo-periodic 
    automorphisms.
    
    Let $(\G^\bullet, A^{\bullet})$ be a decreasing filtration on a connected 
    relative metric ribbon graph $(\G, A)$. That is
    $$(\G,A)=(\G^0, A^0) \supset (\G^1, A^1) \supset \cdots \supset (\G^d, 
    A^d)$$
    where $\supset$ between pairs means $\G^i \supset \G^{i+1}$ and $A^i 
    \supset 
    A^{i+1}$, and where $(\G^i, A^i)$ is a (possibly disconnected) relative 
    metric 
    ribbon graph for each $i=0,\ldots, d$. We say that $d$ is the depth of the 
    filtration $\G^\bullet$. We assume each $\G^i$ does not have univalent 
    vertices 
    and is a subgraph of $\G$ in the usual terminology in Graph Theory. We 
    observe 
    that since each $(\G^i, A^i)$ is a relative metric ribbon graph, we have 
    that 
    $A^i \setminus A^{i+1}$ is a disjoint union of connected components 
    homeomorphic to $\MS^1$.
    
    For each $i=0, \ldots, d$, let $$\delta_i: \G^{i} \rightarrow \RR_{\geq 
    0}$$ be a locally constant map (so it is a map constant on each connected 
    component). We put the restriction that $\delta_0(\G^0)>0$. We denote the 
    collection of all these maps by $\delta_\bullet$.
    
    Let $p \in \G$, we define $c_p$ as the largest natural number such that 
    $p\in 
    \G^{c_p}$.
    
    \begin{definition}(\cite[Definition 7.11]{Bob} Mixed safe walk)\label{def:mixed_tat} Let $(\G^\bullet, 
        A^{\bullet})$ be a filtered relative metric ribbon graph. 
        Let $p \in \G\setminus A \setminus v(\G)$. We define a mixed safe walk 
        $\gamma_p$ starting at $p$ as  a concatenation of paths defined 
        iteratively by the following properties
        \begin{itemize}
            \item[i)] $\gamma_p^0$ is a safe walk of length $\delta_0(p)$ 
            starting at 
            $p^\gamma_0:=p$. Let $p^\gamma_1:=\gamma^0(\delta_0)$ be its 
            endpoint.
            \item[ii)] Suppose that $\gamma_p^{i-1}$ is defined and let 
            $p^\gamma_i$ be its 
            endpoint. 
            \begin{itemize}
                \item If $i > c_p$ or $p^\gamma_{i} \notin \G^{i}$ we stop the 
                algorithm. 
                \item If $i\leq c_p$ and $p^\gamma_{i} \in \G^{i}$ then define 
                $\gamma_p^{i}:[0,\delta_i(p_i)] \rightarrow \G^i$ to be a safe 
                walk of length 
                $\delta_i(p^\gamma_i)$ starting at $p^\gamma_i$ and going in 
                the same direction 
                as $\gamma_p^{i-1}$.
            \end{itemize} 
            \item[iii)] Repeat step $ii)$ until algorithm stops.
        \end{itemize}
        
        Finally, define $\gamma_p:=\gamma_p^k \star \cdots \star \gamma_p^0$, 
        that is, the mixed safe walk starting at $p$ is the concatenation of 
        all the safe walks defined in the inductive process above. 
    \end{definition}
    
    As in the pure case, there are two safe walks starting at each point on $\G 
    \setminus (A \cup v(\G))$. We denote them by $\gamma_p$ and $\omega_p$.
    
    \begin{definition}(\cite[Definition 7.12]{Bob} Boundary mixed safe walk)\label{def:boundary_mixed_tat}
        Let $(\G^\bullet, A^{\bullet})$ be a filtered relative metric ribbon 
        graph and let $p \in A$. We define a boundary mixed safe walk $b_p$ 
        starting at $p$ as  a concatenation of a collection of paths defined 
        iteratively by the following properties
        \begin{itemize}
            \item[i)] $b_{p_0}^0$ is a boundary safe walk of length 
            $\delta_0(p)$ starting 
            at $p_0:=p$ and going in the direction indicated by $A$ (as in the 
            relative \tat case). Let $p_1:=b_p^0(\delta_0)$ be its endpoint.
            \item[ii)] Suppose that $b_{p_{i-1}}^{i-1}$ is defined and let 
            $p_i$ be its endpoint. 
            \begin{itemize}
                \item If $i >c_p$ or $p_{i} \notin \G^{i}$ we stop the 
                algorithm. 
                \item If $i\leq c(p)$ and $p_{i} \in \G^{i}$ then define 
                $b_{p_i}^{i}:[0,\delta_i(p_i)] \rightarrow \G^i$ to be a safe 
                walk of length $\delta_i(p_i)$ starting at $p_i$ and going in  
                the same direction as $b_{p_{i-1}}^{i-1}$.
            \end{itemize} 
            \item[iii)] Repeat step $ii)$ until algorithm stops.
        \end{itemize}
        
        Finally, define $b_p:=b_{p_k}^k \star \cdots \star b_{p_0}^0$, that is, 
        the boundary mixed safe walk starting at $p$ is the concatenation of 
        all the safe walks defined in the inductive process. 
    \end{definition}

    \begin{notation}
        We call the number $k$ in \Cref{def:mixed_tat} (resp. 
        \Cref{def:boundary_mixed_tat}), the \textit{order} of the mixed safe 
        walk (resp.boundary mixed safe walk) and denote it by $o(\gamma_p)$ 
        (resp. $o(b_p)$). 
        
        We denote by  $l(\gamma_p)$  the \textit{length} of the mixed safe walk 
        $\gamma_p$ which is the sum $\sum_{j=0}^{o(\gamma_p)} 
        \delta_j(p^\gamma_j)$ of 
        the lengths of all the walks involved. We consider the analogous 
        definition 
        $l(b_p)$.
        
        As in the pure case, two mixed safe walks starting at $p \in \G 
        \setminus v(\G)$ exist. We denote by $\omega_p$ the mixed safe walk 
        that starts at $p$ but in the opposite direction to the starting 
        direction of $\gamma_p$.
        
        Observe that since the safe walk $b_{p_0}^0$ is completely determined 
        by $p$, for a point in $A$ there exists only one boundary safe walk.
    \end{notation}
    
    Now we define the relative mixed \tat property.
    
    \begin{definition}(\cite[Definition 7.14]{Bob}Relative mixed \tat property)
        \label{def:relative_mix_sim}
        Let $(\G^\bullet, A^{\bullet})$ be a filtered relative metric ribbon 
        graph and 
        let $\delta_\bullet$ be a set of locally constant mappings $\delta_k: 
        \G^k \rightarrow \RR_{\geq 0}$. We say that  $(\G^{\bullet}, A^\bullet, 
        \delta_\bullet)$ satisfies the relative mixed \tat property or that it 
        is a relative mixed \tat graph if for every $p \in \G-(v(\G) \cup A)$
        \begin{itemize}
            \item[I)] The endpoints of $\gamma_p$ and $\omega_p$ coincide.
            \item[II)] $c_{\gamma_p(l({\gamma_p}))}=c_p$
        \end{itemize}
        and for every $p \in A$, we have that  
        \begin{itemize}
            \item[III)] $b_p(l({b_p})) \in A^{c_p}$
        \end{itemize} 
    \end{definition}
    
    As a consequence of the two previous definitions we have:
    \begin{lemma} \label{lem:mix_prop}
        Let $(\G^{\bullet}, A^\bullet,\delta_\bullet)$ be a mixed relative \tat 
        graph, then
        \begin{itemize}
            \item[a)] $o(\omega_p)=o(\gamma_p)= c_p$
            \item[b)]$l(\gamma_p) = l(\omega_p)$ for every $p \in \G \backslash 
            v(\G)$.
        \end{itemize}
    \end{lemma}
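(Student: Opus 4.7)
The plan is to establish (a) first, since (b) will rely on the conclusion that both walks run through exactly $c_p+1$ segments.

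For (a), I observe that by the stopping rule in \Cref{def:mixed_tat}, one always has $o(\gamma_p)\le c_p$, with strict inequality only if the algorithm halts at some step $k+1\le c_p$ because $p^\gamma_{k+1}\notin\G^{k+1}$. Now condition (II) of the mixed \tat property asserts $c_{\gamma_p(l(\gamma_p))}=c_p$, i.e., the final endpoint of $\gamma_p$, which by construction is the terminal point of the last segment $\gamma_p^{o(\gamma_p)}$, lies in $\G^{c_p}$. If we had $o(\gamma_p)=k<c_p$, that endpoint would be $p^\gamma_{k+1}$, which by the filtration satisfies $\G^{c_p}\subset\G^{k+1}$, forcing $p^\gamma_{k+1}\in\G^{k+1}$ and contradicting the stopping condition. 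Hence $o(\gamma_p)=c_p$, and the same argument gives $o(\omega_p)=c_p$.

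For (b), the strategy is a downward induction on the level $i$, showing simultaneously that $p^\gamma_i$ and $p^\omega_i$ lie in the same connected component of $\G^i$, and consequently that $\delta_i(p^\gamma_i)=\delta_i(p^\omega_i)$ by local constancy of $\delta_i$. The base case $i=c_p$ uses condition (I): the two endpoints $p^\gamma_{c_p+1}$ and $p^\omega_{c_p+1}$ coincide, and both $\gamma_p^{c_p}$ and $\omega_p^{c_p}$ are safe walks contained in $\G^{c_p}$ terminating at that common point, hence $p^\gamma_{c_p}$ and $p^\omega_{c_p}$ are connected to it inside $\G^{c_p}$. For the inductive step, if $p^\gamma_i$ and $p^\omega_i$ are in the same component of $\G^i$, then since each connected component of $\G^i$ is contained in a single connected component of $\G^{i-1}$, they remain in the same component of $\G^{i-1}$; the safe walks $\gamma_p^{i-1},\omega_p^{i-1}\subset\G^{i-1}$ then connect $p^\gamma_{i-1}$ to $p^\gamma_i$ and $p^\omega_{i-1}$ to $p^\omega_i$ respectively inside $\G^{i-1}$, so $p^\gamma_{i-1}$ and $p^\omega_{i-1}$ are in the same component of $\G^{i-1}$. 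Summing the equalities $\delta_i(p^\gamma_i)=\delta_i(p^\omega_i)$ over $i=0,\ldots,c_p$ yields $l(\gamma_p)=l(\omega_p)$.

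There is no real obstacle here; the argument is essentially bookkeeping, and the only point requiring care is the direction of the induction in part (b), together with the observation that a component of $\G^i$ is contained in (not equal to) a component of $\G^{i-1}$, which is why the property that $p^\gamma_i$ and $p^\omega_i$ are componentwise close propagates downwards rather than upwards.
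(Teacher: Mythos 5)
Your argument is correct: part (a) follows from the stopping rule together with condition (II) and the fact that the filtration is decreasing (so $\G^{c_p}\subset\G^{k+1}$ when $k+1\le c_p$), and part (b) follows from the downward induction on the level, using condition (I) for the base case $i=c_p$, the containment of each safe-walk segment $\gamma_p^i,\omega_p^i$ in $\G^i$, and the local constancy of $\delta_i$ to conclude $\delta_i(p^\gamma_i)=\delta_i(p^\omega_i)$ for every $i$. The paper itself gives no argument here (it simply refers to \cite{Bob}), so there is no in-paper proof to compare against; your self-contained bookkeeping argument is exactly the kind of proof the cited reference supplies, and I see no gap in it.
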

    
    \begin{proof}
    	See \cite[Lemma 7.15]{Bob}.
    \end{proof}
    \begin{remark} Note that for mixed \tat graphs it is not true that 
        $p\mapsto 
        \gamma_p(\delta(p))$ gives a continuous mapping from $\G$ to $\G$.
    \end{remark}
    
    \begin{remark}\label{rem:equiv_mixed_tat}
        Satisfying $I)$ and $II)$ of the mixed \tat property in 
        \Cref{def:relative_mix_sim} is equivalent to satisfying:
        \begin{itemize}
            \item[I')] For all $i=0, \dots, d-1$, the automorphism $\widetilde
            \phi_{\G,i}=\calD_{\delta_i}\circ\phi_{\G,i-1}$ is compatible with 
            the gluing
            $g_i$, that is, $$g_i(x) =g_i(y) \Rightarrow 
            g_i(\widetilde{\phi}_{\G,i}(x)) =
            g_i(\widetilde{\phi}_{\G,i}(y)).$$
        \end{itemize}
        Below we see the diagram which shows the construction of $\phi_\G$.

        \begin{equation}
        \begin{tikzcd}\label{diag:mixed_tat}
        \Si_{\G^{0}} \arrow[rightarrow]{r}{ \phi_{\G,-1}} \arrow{d}{g_0} &
        \Si_{\G^{0}} \arrow[rightarrow]{r}{\calD_{\delta_0}}             &
        \Si_{\G^{0}}  \arrow{d}{g_0}                                     &
        & 
        & 
        & 
        &\\
        \Si_{\G^{1}} \arrow{d}{g_1} \arrow[rightarrow]{rr}{ \phi_{\G,0}} &
        &
        \Si_{\G^{1}} \arrow[rightarrow]{r}{\calD_{\delta_1}} &
        \Si_{\G^{1}} \arrow{d}{g_1}  &  &  & &\\
        \Si_{\G^2} \arrow{d}{g_2} \arrow{rrr}{ {\phi}_{\G,1}}           &  &  &
        \Si_{\G^{2}} \arrow{r}{\calD_{\delta_2}}                      &
        \Si_{\G^{2}}  \arrow{d}{g_2}                                 &   &&&\\
        \vdots                                                           &
        \vdots                                                          &
        \vdots                                                         &
        \vdots                                                       &
        \vdots                                                      &  &    & \\
        \Si_{\G^d} \arrow{d}{g_d} \arrow{rrrrrr}{ {\phi}_{\G,d-1}}       & &&&&&
        \Si_{\G^d} \arrow{r}{\calD_{\delta_d}}                           &
        \Si_{\G^d} \arrow{d}{g_d}                                           \\
        \Si \arrow{rrrrrrr}{\phi_{\G}=\phi_{\G,d}}                       &&&&&&&
        \Si                                                             
        \end{tikzcd}
        \end{equation}
     \end{remark}
       \begin{remark}\label{rem:screw_number}
        The pseudo-periodic automorphism $\phi_\G$ induced by a mixed \tat 
        graph has negative screw numbers and positive fractional Dehn twist 
        coefficients  as noted in \cite[Remark 7.28]{Bob}. Actually, it was also proved in 
        \cite{Bob} that the screw number 
        associated to an orbit of annuli  $\calA^i_{j,1}, \ldots, 
        \calA^i_{j,k}$ 
        between levels $i-1$  and $i$ of the filtration is            
        \begin{equation}
            -\sum_k \delta_i(\G^i_{j,k}) / l(\TG^i_{j,1}).
          \end{equation}
        \end{remark}
    
    \section{Realization theorem} \label{sec:main_thm}
    
    In this section we prove \Cref{thm:realization_mixed} which is the main 
    results of this paper. It characterizes the pseudo-periodic automorphisms 
    that can be realized by mixed \tat graphs. First we introduce some notation 
    and conventions.

    Let $\phi:\Si \rightarrow \Si$ be a pseudo-periodic automorphism. For the 
    remaining of this work we impose the following restrictions on $\phi$:
    
    \begin{enumerate}[label=(\alph*)]
        \item \label{rest:i} The screw numbers are all negative. 
        \item \label{rest:ii} It leaves at least one boundary component 
        pointwise fixed 
        and the fractional Dehn twist coefficients at these boundary components 
        are 
        positive.
    \end{enumerate}
    
    Denote by $\partial^1 \Si \subset \partial \Si$ the union of the boundary 
    components pointwise fixed by $\phi$. We assume that $\phi$ is given in 
    some almost-canonical form as in \Cref{rem:mod_canonical_form}.

    \begin{notation}\label{not:quot_graph}
        We define a graph $G(\phi)$ associated to a given almost-canonical form:
        
        \begin{enumerate}
            \item It has a vertex $v$ for each subsurface of $\Si \backslash 
            \mathcal{A}$ whose connected components are cyclically permuted by 
            $\phi$.
            
            \item For each set of annuli in $\calA$ permuted cyclically it has 
            an edge 
            connecting the vertices corresponding to the surfaces on each side 
            of the collection of annuli.
        \end{enumerate}
        Let $ \NNd$ denote the set of vertices of $G(\phi)$. 
    \end{notation}
    
    \begin{definition}\label{def:filtering_function}
        We say that a function $L: \NNd \to \ZZ_{\geq  0}$ is a
        \textit{filtering function} for $G(\phi)$ if it satisfies:
        \begin{enumerate}
            
            \item \label{it:filt_i}
            If $v,v' \in  \NNd$ are connected by an edge, then $L(v) \neq 
            L(v')$.
            
            \item \label{it:filt_ii}
            If $v\in \NNd$, then either $v$ has a neighbor $v'\in \NNd$ with 
            $L(v) > L(v')$, or $L(v)=0$ and $\Si_v$ contains a component of 
            $\partial^1\Si$.
        \end{enumerate}
    \end{definition}
    
    Condition $ii)$ above implies that for $L$ to be a filtering function,
    $L^{-1}(0)$ must only contain vertices corresponding to subsurfaces of 
    $\Si \backslash \mathcal{A}$ that contain some component of $\partial^1 
    \Si$. That same condition assures us that $L^{-1}(0)$ is non-empty.
    
    \begin{definition}\label{def:distance_function}
        Define the function $ D: \NNd \to \ZZ_{\geq  0}$ as follows:
        \begin{enumerate}
            \item $ D(v) = 0$ for all $v$ with $\Si_v \cap \partial^1 \Si \neq 
            \emptyset$.
            \item $ D(v)$ is the distance to the set $ D^{-1}(0)$, that is the 
            number of edges of the smallest bamboo in $G(\phi)$  connecting $v$ 
            with some vertex in $ D^{-1}(0)$.
        \end{enumerate}
    We call it  the {\em distance function} or {\em distance to the boundary}.
    \end{definition}
    
    \begin{remark}\label{rem:modified_canonical}
        Take some $\phi: \Si \to \Si$ in canonical form and observe that the 
        function $D$ might not be a filtering function. It can happen that 
        there are two adjacent vertices $v,v'\in \NNd$ with $ D(v)= D(v')$ or 
        even that there is a vertex with a loop (an edge starting and ending at the same vertex) based at it. See for example
        \Cref{ex:realization}.  However we  modify the canonical form into an 
        almost-canonical form for which the function $ D$ {\em is} a filtering 
        function:
        
        Let $\phi: \Si \to \Si$ be an automorphism in canonical form such that $
        D(v)=  D(v')$ for some adjacent $v,v' \in \NNd$. Take one edge 
        joining $v$ and $v'$, this edges corresponds to a set of annuli 
        $\calA_1, \ldots, \calA_k$ being permuted cyclically by $\phi$. For 
        each $i=1, \ldots, k$, let $\eta_i: \MS^1 \to \calA_i$ be 
        parametrizations as in \Cref{lem:linear_ex}. Let $\calC_i \subset 
        \calA_i$ be the core curves of the annuli. We distinguish two cases:
        
        \begin{enumerate}
            \item[(1)] The core curves are not amphidrome. By 
            \Cref{rem:linear_prep} we can 
            isotope $\phi$ on the annuli $\calA_i$ to a automorphism 
            $\tilde{\phi}$ 
            without changing the action of $\phi$ on $\partial \calA_i$ so that 
            in the annuli $\eta_i(\MS^1 \times [\frac{1}{3}, \frac{2}{3}])$ it 
            is periodic. In doing so, we can redefine the canonical form to an 
            almost-canonical form as follows.
            \begin{enumerate}
                \item for each $i=1, \ldots,k$ take $\calC_i$ out from the set 
                $\calC$ and 
                include $\eta_i(\MS^1 \times \{ \frac{1}{6} \})$ and  
                $\eta_i(\MS^1 \times 
                \{\frac{5}{6} \})$.
                \item for each $i=1, \ldots,k$ take $\calA_i$ out of $\calA$ 
                and include 
                $\eta_i(\MS^1 \times [0,1/3])$ and  $\eta_i(\MS^1 \times 
                [\frac{2}{3},1]).$
            \end{enumerate}
            It is clear that this new set of data defines an almost canonical 
            form for 
            $\tilde\phi$ and that on the corresponding $G(\tilde{\phi})$ the 
            vertices $v$ and $v'$ are no longer adjacent since a new vertex 
            corresponding to the surface $\bigcup_i \eta_i(\MS^1 \times 
            [\frac{1}{3}, \frac{2}{3}] )$ appears between them.
            
            \item[(2)] The core curves are amphidrome. This case is completely 
            analogous to 
            case $(1)$ with the advantage that by definition of $\tilde{\calD}_s$
            in \Cref{not:amphi_dehn}, it is already periodic in the central 
            annuli.
        \end{enumerate}
        
        It is clear that after performing $(1)$ or $(2)$ (accordingly) for all 
        pairs of adjacent vertices $v,v'$ with $D(v)= D(v')$ we provide $\phi$ 
        with an almost-canonical whose distance function $D$ is a filtering 
        function.
    \end{remark}
    
    \begin{remark}\label{rem:not_amphidrome}
        We observe that orbits of amphidrome annuli $\calA_1, \ldots, \calA_i$ 
        correspond to loops in $G(\phi)$. So we have that after performing the 
        modification of \Cref{rem:modified_canonical}, the almost-canonical 
        form of $\phi$ does not have any amphidrome annuli in $\calA$. However, 
        some of the surfaces of $\Si \setminus \calA$ are now amphidrome annuli.
    \end{remark}
    
    \begin{notation}\label{not:qp2}
        We assume $\phi$ is in the almost-canonical form induced from the 
        canonical form after performing the modification described in 
        \Cref{rem:modified_canonical}. We denote by $\hat{\Si}$ the closure of 
        $\Si\setminus\calA$ in $\Si$. Let $\hat G(\phi)$ be a graph constructed 
        as follows:
        
        \begin{enumerate}
            \item It has a vertex for each connected component of 
            ${\Si}\setminus\calC$.
            \item There are as many edges joining two vertices as curves in 
            $\calC$ intersect the two surfaces corresponding to those vertices.
        \end{enumerate}
        We observe that the previously defined $G(\phi)$ is nothing but the 
        quotient of
        $\hat G(\phi)$ by the action induced by $\phi$ on the connected 
        components of $\Si \setminus \calA$.
        
        Let $\hat \NNd$ be the set of vertices of $\hat G(\phi)$. Since $\phi$ 
        permutes the surfaces in $\hat{\Si}$, it induces a permutation of the 
        set $\hat \NNd$ which we denote by $\sigma_\phi$. We label the set 
        $\hat \NNd$, as well as the connected components of 
        $\hat{\Si}$ and the connected components of $\calA$ in the following 
        way:
        \begin{enumerate}
            \item Label the vertices that correspond to surfaces containing 
            components of
            $\partial^1 \Si$ by $v^0_{1,1},v^0_{2,1}, \ldots, v^0_{\beta_0,1}$. 
            Let 
            $V^0$ be the union of these vertices. Note that
            $\sigma_\phi(v^0_{j,1})=v^0_{j,1}$ for all $j=1, \ldots, \beta_0$.
            \item Let $\hat D: \hat \NNd \rightarrow \ZZ_{\geq 0}$  be the 
            distance function to $V^0$, that is, $\hat D(v)$ is the number of 
            edges of the smallest path in $\hat G(\phi)$  that joins $v$ with 
            $V^0$. Let $V^i:= \hat D^{-1}(i)$. Observe that the permutation 
            $\sigma_\phi$ leaves the set $V^i$ invariant. There is a labeling 
            of $V^i$ induced by the orbits of $\sigma_\phi$: suppose it has 
            $\beta_i$ different orbits. For each $j=1, \ldots, \beta_i$, we 
            label the vertices in that orbit by $v^i_{j,k}$ with $k=1, \ldots, 
            \alpha_j$ so that $\sigma_\phi(v^i_{j,k})=v^i_{j,k+1}$ and 
            $\sigma_\phi(v^i_{j,\alpha_j})=v^i_{j,1}$. 
        \end{enumerate}
        
        Denote by $\Si^i_{j,k}$ the surface in $\hat{\Si}$ corresponding to the 
        vertex $v^i_{j,k}$. Denote by $\Si^i$ the union of the surfaces 
        corresponding to the vertices in $V^i$. We denote by $\Si^{\leq i}$ the 
        union of $\Si^0, \ldots, \Si^i$ and the annuli in between them.
        
        We recall that  $\alpha_j$ is the smallest positive number such that 
        $\phi^{\alpha_j}(\Si^i_{j,k})=\Si^i_{j,k}$.
        
    \end{notation} 
    
    \begin{thm}\label{thm:realization_mixed}
        Let $\phi: \Si \rightarrow \Si$ be a pseudo-periodic automorphism 
        satisfying assumptions \ref{rest:i} and \ref{rest:ii}. Then there 
        exists a relative mixed \tat graph 
        $(\G^{\bullet}, A^{\bullet}, \delta_\bullet)$ with $\G$ embedded in 
        $\Si$ such 
        that:
        \begin{enumerate}
            \item $\delta_i$ is a constant function for each $i=1, \ldots, d$.
            \item $[\phi]_{\partial^1 \Si}= [\phi_{\G}]_{\partial^1 \Si}$.
            \item $\phi|_{\partial\Si \setminus \partial^1 \Si}= 
            \phi_{\G}|_{\partial\Si 
                \setminus \partial^1 \Si}$.
            \item Filtration indexes are induced by the distance function $D$ 
            for the 
            almost-canonical form induced from the canonical form by 
            \Cref{rem:modified_canonical}.
        \end{enumerate}
        
    \end{thm}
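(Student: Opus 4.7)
The plan is to construct the filtered graph inductively on the level of the filtration, with the filtration indexing given by the distance function $D$ and the construction proceeding outward from $\partial^1\Si$. First I would invoke \Cref{rem:modified_canonical} to replace $\phi$ by an isotopic automorphism in an almost-canonical form for which $D$ is a filtering function, and adopt \Cref{not:qp2} for the resulting stratification $\Si^{\leq 0} \subset \Si^{\leq 1} \subset \cdots \subset \Si^{\leq d} = \Si$. On each stratum $\Si^i_{j,k}$, the periodic representative $\phi^{\alpha_j}|_{\Si^i_{j,1}}$ coming from \Cref{theo:canonical} furnishes the data to which \Cref{thm:characterization_tat} will be applied.

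For the base level $i=0$, each orbit representative $\Si^0_{j,1}$ meets $\partial^1\Si$, which is fixed pointwise by $\phi$ with positive fractional Dehn twist coefficients by hypothesis \ref{rest:ii}. \Cref{thm:characterization_tat} then yields a relative pure \tat graph $\Gamma^0_{j,1} \subset \Si^0_{j,1}$ whose $A$-part consists of the boundary components of $\Si^0_{j,1}$ adjacent to annuli at the interface with $\Si^1$; the action of $\phi$ propagates this graph equivariantly along the orbit. For the inductive step $i \ge 1$, use \Cref{lem:linear_ex} and \Cref{lem:special_ex} to linearize $\phi$ on the annuli of $\calA$ incident to $\Si^i_{j,1}$; after passing to an appropriate power so that a chosen boundary component adjacent to $\Si^{\leq i-1}$ is pointwise fixed and applying \Cref{thm:characterization_tat}, I would extract a relative pure \tat graph inside $\Si^i_{j,1}$ whose $A$-part corresponds to the boundaries facing $\Si^{i+1}$. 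The constants $\delta_i$ are then forced by the screw-number formula of \Cref{rem:screw_number}: for each orbit of annuli between levels $i-1$ and $i$, the identity $s(\calA^i_{j,\cdot})=-\sum_k \delta_i(\G^i_{j,k})/l(\TG^i_{j,1})$, combined with the requirement that $\delta_i$ be constant on the orbit and the hypothesis that $s$ be negative (restriction \ref{rest:i}), uniquely determines a positive value.

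Assembling the local graphs together with connecting arcs across the annuli produces a candidate filtered ribbon graph $(\G^\bullet, A^\bullet, \delta_\bullet)$ embedded in $\Si$, where $\G^i$ consists of the pieces at filtration level at least $i$ and $A^i$ records the boundary circles facing deeper strata. The principal obstacle is establishing the relative mixed \tat property of \Cref{def:relative_mix_sim}, specifically condition I') of \Cref{rem:equiv_mixed_tat}: that at each level $i$, the map $\calD_{\delta_i}\circ \phi_{\G,i-1}$ descends across the gluing $g_i$. This amounts to verifying that the safe-walk extensions across the annuli at level $i$ agree on both sides of each core curve; the required compatibility follows from the normalized parametrizations of \Cref{lem:linear_ex} and \Cref{lem:special_ex} together with \Cref{rem:linear_prep} and \Cref{rem:mod_linear_ex} (which localize the twist in a central sub-annulus), and from the rotation-number invariance noted in \Cref{re:sk}. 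Once this compatibility is in place, conclusion (2) follows from \Cref{thm:characterization_tat} applied at level $0$ together with the tower of gluings in the diagram of \Cref{rem:equiv_mixed_tat}, conclusion (3) from the fact that the induced $\phi_\G$ acts on each boundary component by the prescribed boundary Dehn twist matching $\phi$, and conclusion (4) holds by construction.
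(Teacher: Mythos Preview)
Your inductive outline matches the paper's strategy, but there are two concrete gaps that make the argument fail as written.

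First, at levels $i\ge 1$ you invoke \Cref{thm:characterization_tat} after ``passing to an appropriate power so that a chosen boundary component adjacent to $\Si^{\leq i-1}$ is pointwise fixed''. This does not give what is needed. The boundary components of $\Si^i_{j,1}$ facing $\Si^{i-1}$ are merely rotated by $\phi^{\alpha_j}$, and a \tat graph built for a higher power need not be $\phi^{\alpha_j}$-invariant, nor will its metric match the metric already induced on those boundaries by $\G[i-1]$ through the annulus parametrizations of \Cref{lem:linear_ex}. The paper uses \Cref{lem:tech_tat} here instead: given a periodic automorphism and a $\phi$-invariant metric on a chosen set of (invariant, not fixed) boundary components, it produces a $\phi$-invariant relative metric ribbon graph with $l(\TG_i)=l(C_i)$ and retraction lines compatible with $\phi$. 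That lemma, not \Cref{thm:characterization_tat}, is what makes the metric glue coherently across the annuli and lets you verify I') of \Cref{rem:equiv_mixed_tat}. (Also note that after \Cref{rem:modified_canonical} no amphidrome annuli remain, so \Cref{lem:special_ex} is not needed; cf.\ \Cref{rem:not_amphidrome}.)

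Second, your determination of $\delta_i$ is only ``constant on the orbit'', whereas the theorem requires $\delta_i$ to be a single constant on all of $\G^i$. When several orbits of annuli join $\Si^{i-1}$ to $\Si^i$, the screw-number formula yields one value $-s_r\,l(\calB_{r,1})/\ell_r$ per orbit, and these have no reason to agree. The paper handles this with \Cref{lem:technical_mixed_bound}: before extending to level $i$, one shrinks the relative boundary circles $\calB_{r,1}$ in $\G[i-1]$ (without changing the mapping class) so that all the quantities $s_r\,l(\calB_{r,1})/\ell_r$ become equal; only then can $\delta_i$ be taken constant. You have no substitute for this step.
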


    Now we state and prove \Cref{lem:tech_tat} and 
    \Cref{lem:technical_mixed_bound}
    which are used in the proof of \Cref{thm:realization_mixed}.
    
    \begin{lemma}\label{lem:tech_tat}
        Let $\phi: \Si \rightarrow \Si$ be a periodic automorphism of order 
        $n$. Let 
        $C=C_1 \sqcup \cdots \sqcup C_k$ be a non-empty collection of boundary 
        components of $\Si$ such that $\phi(C_i)=C_i$, that is, each one is 
        invariant by $\phi$. For each $i$ let $m_i$ be a metric on $C_i$ 
        invariant by $\phi$. Then there exists a relative metric ribbon graph 
        $(\G, A) \hookrightarrow 
        (\Si, \partial \Si \setminus C)$  and parametrizations of the cylinders 
        (see \Cref{not:cylinders_notation}) $r_i: \MS^1 \times I \to 
        \tilde\Si_i$ such that:
        \begin{enumerate}
            \item $\phi(\G)=\G$ and the metric of $\G$ is also invariant by 
            $\phi$.
            \item $l(\tilde\G_i) = l(C_i)$.
            \item The projection from $C_i$ to $\tilde{\G}_i$ induced by $r_i$ 
            is an 
            isometry, that is, the map $$ r(\theta,0) \mapsto r(\theta,1) $$ is 
            an isometry.
            \item $\phi$ sends retractions lines (i.e. $\{\theta\} \times I$) 
            to retractions lines.
        \end{enumerate}
    \end{lemma}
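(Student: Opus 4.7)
The plan is to pass to the quotient orbifold $\mathcal{O} := \Si/\langle\phi\rangle$, construct a relative metric ribbon graph $(\G', A')$ on $\mathcal{O}$ with the required properties ``downstairs'', and then lift it via the quotient map $p : \Si \to \mathcal{O}$. Since $\phi$ has finite order, $p$ is a (possibly branched) covering, and the pullback $\G := p^{-1}(\G')$ is automatically $\phi$-invariant, with $\phi$ acting as a deck transformation; moreover, edges upstairs inherit the lengths of edges downstairs, giving the $\phi$-invariant metric required in (1).

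First, I would use the $\phi$-invariance of $m_i$ on each $C_i$ to fix a $\phi$-equivariant collar neighborhood $N \cong C \times [0,1]$ on which $\phi$ acts only on the first factor. Let $d_i$ be the order of $\phi|_{C_i}$; then the quotient $C'_i := p(C_i)$ is a circle of length $l(C_i)/d_i$ and the restriction $p|_{C_i} : C_i \to C'_i$ is a cyclic covering of degree $d_i$ multiplying length by $d_i$.

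Second, I would construct a relative metric ribbon graph $(\G', A')$ on $\mathcal{O}$ with $A' = p(\partial\Si \setminus C)$, satisfying the analogs of (2)--(4) with respect to the target lengths $l(C_i)/d_i$. For this, take a cylindrical collar $\widetilde{\mathcal{O}}_i \cong C'_i \times [0,1]$ of each $C'_i$, and let $\widetilde{\G}'_i := C'_i \times \{0\}$ with the induced metric of length $l(C_i)/d_i$ and the obvious product retraction $r'_i$, whose boundary projection is tautologically an isometry. Then attach a metric ribbon graph spine of the complement $\mathcal{O} \setminus \bigcup \widetilde{\mathcal{O}}_i$ containing $A' \cup \bigcup \widetilde{\G}'_i$ as its boundary part, with arbitrary positive lengths on interior edges; such a spine exists by standard spine theory for compact surfaces with boundary.

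Finally, set $\G := p^{-1}(\G')$ and pull back the metric and the product structures $r_i := p^{-1}(r'_i)$. Property (1) is by construction. Property (2) holds because the $d_i$-fold cover $\widetilde{\G}_i \to \widetilde{\G}'_i$ multiplies length, giving $l(\widetilde{\G}_i) = d_i \cdot l(C_i)/d_i = l(C_i)$. Property (3) follows because the lift of an isometric boundary identification along a covering is again an isometry. Property (4) holds because lifts of retraction lines are retraction lines, and the deck group $\langle\phi\rangle$ permutes them. The main technical obstacle is handling orbifold cone points of $\mathcal{O}$, which correspond to isolated fixed points of nontrivial powers of $\phi$ in the interior of $\Si$: these must be avoided by the interior of $\G'$ in order that $\G$ be a genuine ribbon graph upstairs, but this is a generic condition that can always be arranged when choosing the spine of $\mathcal{O} \setminus \bigcup \widetilde{\mathcal{O}}_i$.
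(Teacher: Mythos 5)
Your overall strategy---pass to the orbit surface $\Si^{\phi}=\Si/\langle\phi\rangle$, build a relative metric spine downstairs whose part near each $p(C_i)$ is a circle of length $l(C_i)/n$ with a product collar, and pull everything back along the branched covering $p$---is essentially the paper's proof, and the length bookkeeping is fine (indeed $d_i=n$ automatically, since an orientation-preserving finite-order map fixing a circle pointwise is the identity).

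However, there is a genuine error in your treatment of the branch points, and it is exactly backwards. You insist that the spine $\G'$ downstairs avoid the cone points and call this a generic condition; the paper's construction requires the opposite: its condition $1)$ demands that $\G^{\phi}$ \emph{contain} all branch points of $p$. If some branch point $b$ is missed by $\G'$, then, since $\G'$ is a spine containing $A'$ and the relative components carry no complementary cylinder, $b$ lies in one of the half-open annuli between $\G'$ and some $p(C_i)$. The corresponding piece of $\Si$ cut along $\G=p^{-1}(\G')$ is then a cover of an annulus branched over $b$, which by Riemann--Hurwitz has negative Euler characteristic and so is not a cylinder. Hence $\Si$ is not the relative thickening of $(\G,A)$, the cylinders $\tilde\Si_i$ and parametrizations $r_i$ of \Cref{not:cylinders_notation} do not exist, and conclusions (2)--(4) cannot even be stated; $\Si$ does not deformation retract to your $\G$. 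The concern that motivated your choice is unfounded: the preimage of a graph passing through a branch point of index $m$ is still a graph (the branch point lifts to a vertex of $m$ times the valence), and it inherits a ribbon structure from the orientation of $\Si$. What you must do instead is isotope the spine downstairs so that it passes through every branch point (while containing $A'$ and the boundary-parallel circles of the prescribed lengths), which is what the paper does.
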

    
    \begin{proof}
        The proof uses essentially the same technique used in the proof of 
        Theorem 5.14 and 5.27 in  \cite{Bob}. For completeness we outline it 
        here.
        
        Let $\Si^{\phi}$ be the orbit surface and suppose it has genus $g$ and 
        $r \geq k$ boundary components. Let $p: \Si \to \Si^{\phi}$ be the 
        induced branch cover. 
        
        Take any relative spine $\G^{\phi}$ of $\Si^\phi$ that:
        
        \begin{enumerate}[label=(\arabic*)]
            \item Contains all branch points of the map $p$.
            \item Contains the boundary components $p(\partial \Si \setminus 
            C)$.
            \item Admits a metric such that $p(C_i)$ retracts to a part of the 
            graph of length $l(C_i)/n$.
        \end{enumerate}
        
        We observe that conditions $(1)$ are $(2)$ are trivial to get. Condition 
        $3)$ follows because of the proof of Theorem 5.14 and 5.27 in \cite{Bob}. 
        There, the conditions on the metric of the graph $\G^{\phi}$ come from 
        the rotation numbers of $\phi$, however, we do not use that these 
        numbers come from $\phi$ in finding the appropriate graph so exactly 
        the 
        same argument applies.
        
        Observe that since the metric on $C_i$ is invariant by $\phi$, there is 
        a metric induced on $p(C_i)$ for $i=1, \ldots k$. Now choose any 
        parametrizations (or product structures) of the cylinders in 
        $\Si^{\phi}_{\G^{\phi}}$ such that their retractions lines induce an 
        isometry from $p(C_i)$ to $\TG^\phi_i$.
        
        Define $\G:=p^{-1}(\G^{\phi})$. By construction, this graph satisfies 
        $(i)$ and 
        $(ii)$. The preimage by $p$ of retraction lines on $\Si^{\phi}$ gives 
        rise to parametrizations of the cylinders in $\Si_{\G}$ satisfying 
        $iii)$ and $iv)$.
    \end{proof}

    \begin{lemma}\label{lem:technical_mixed_bound}
        Let $(\G^{\bullet}, B^{\bullet}, \delta_\bullet)$ be a relative mixed 
        \tat graph embedded in a surface $\Si$ and let $C_1, \ldots, C_k 
        \subset B$ a set of relative boundary components cyclically permuted by 
        $\phi$. Suppose that all the vertices in these boundary components are 
        of valency $3$. Then we can  modify the metric structure of the graph 
        to produce a mixed \tat graph 
        $(\hat{\G}^{\bullet}, \hat{B}^{\bullet}, \delta_\bullet)$ with $l(C_i)$ 
        as small as we want and with $[\phi_\G]_{\partial^1 \Si} = 
        [\phi_{\hat{\G}}]_{\partial^1 \Si}$.
    \end{lemma}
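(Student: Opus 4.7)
The plan is to exhibit $\hat\G$ as the graph obtained from $\G$ by a local metric modification in a neighborhood of the orbit $C_1 \cup \cdots \cup C_k$, and then verify the two required conclusions directly. The hypothesis that every vertex on each $C_i$ is trivalent is central: each such vertex has exactly one ``hair'' edge leaving $C_i$ into the interior of $\G$, and by cyclic permutation under $\phi$ all the $C_i$ carry identical combinatorial arrangements of hairs and have a common length $\ell$.

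First I would describe the modification. For any $\mu \in (0,\ell)$, define $\hat\G$ to have the same underlying ribbon graph, same filtration $\G^\bullet$, same cyclic orders at vertices, and same functions $\delta_\bullet$ as $\G$, but rescale the length of every edge lying on some $C_i$ by the factor $\mu/\ell$, so that $l(\hat C_i) = \mu$. Since the rescaling is $\phi$-equivariant, this produces a well-defined metric ribbon graph, and $\mu$ may be taken arbitrarily small.

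Next I would verify that $\hat\G$ is still a mixed \tat graph. For a boundary mixed safe walk starting at $p \in C_i$, the walk travels in the direction prescribed by $A$; at each trivalent vertex of $C_i$ the only alternative to continuing along $C_i$ is the hair, which lies in the forbidden direction, so the walk remains on $C_i$ at every level up to $c_p$, and its endpoint lies in $C_i \subset A^{c_p}$, giving condition (III) of \Cref{def:relative_mix_sim}. For an interior mixed safe walk, the turn-right rule at a trivalent vertex of $C_i$ forces any visit to $C_i$ to enter via a hair, traverse a prescribed segment of $C_i$-edges between two consecutive hairs, and exit via another hair; because $\gamma_p$ and $\omega_p$ are built by the same rule and the orbit is $\phi$-invariant, they execute matching sequences of excursions through the orbit, so uniformly rescaling the $C_i$-edges preserves the endpoint identities required by conditions (I) and (II).

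Finally, the mapping class identity $[\phi_\G]_{\partial^1\Si} = [\phi_{\hat\G}]_{\partial^1\Si}$ follows because the modification is supported in a collar of $\bigsqcup_i C_i \subset \partial \Si \setminus \partial^1 \Si$ and only changes the rotation rate on those boundary components, which is realized by an isotopy trivial on $\partial^1\Si$. The main obstacle in this plan is the verification of conditions (I) and (II) for interior safe walks making multiple excursions through the orbit: this requires the careful local analysis of turn-right dynamics at the trivalent boundary vertices combined with $\phi$-equivariance of the orbit, in order to rule out mismatches between $\gamma_p$ and $\omega_p$ that the rescaling could in principle introduce.
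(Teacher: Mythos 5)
There is a genuine gap, and it lies exactly where you flag it. Your modification rescales only the edges lying on the circles $C_i$ and changes nothing else (same $\delta_\bullet$, same lengths on all other edges). But mixed safe walks are parametrized by arc length with prescribed total lengths $\delta_i$, so shortening the $C_i$-edges shortens every route that enters the circle through one hair, runs along some $C_i$-edges, and exits through another hair; a walk of the same prescribed length therefore overshoots its old endpoint by exactly the amount of $C_i$-length it used to traverse. Your justification that conditions (I)--(III) survive rests on the claim that $\gamma_p$ and $\omega_p$ ``execute matching sequences of excursions through the orbit'', i.e.\ that the two walks from $p$ spend equal total length on the $C_i$-edges. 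Nothing in the \tat property gives this: the two walks from $p$ are only required to have the same endpoint, not to traverse comparable portions of the graph, and in general one of them may pass through the circles while the other does not (or passes through a different number of times). Whenever that happens, the rescaling moves the endpoint of one walk and not the other, and condition (I) fails; similarly the endpoint of a boundary walk need not stay in $A^{c_p}$ (note also that a boundary safe walk does not in general stay on $C_i$ --- it may exit along a hair and only return to $A$ at its endpoint, so even your verification of (III) is not sound as stated). Finally, even in cases where the \tat property happened to survive, the induced automorphism $\phi_{\hat\G}$ would differ from $\phi_\G$ on the rest of the surface, so the mapping-class identity would still need an argument.

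The paper's proof avoids all of this by compensating: each of the $m$ edges on $C_1$ is shortened by $\epsilon/m$, and each adjacent edge $f_{i,1}$ is lengthened by $\epsilon/2m$ (or, at higher-valency vertices, a new edge $g_i$ of length $\epsilon/2m$ is extruded). Since every safe-walk route through the circle enters along one hair, crosses one boundary edge, and leaves along the next hair, the loss of $\epsilon/m$ on the boundary edge is exactly balanced by the two gains of $\epsilon/2m$, so all safe-walk distances through the modified region --- and hence all walk endpoints and the induced automorphism --- are unchanged, with the same $\delta_\bullet$. When $\epsilon/m$ exceeds the shortest boundary edge the procedure is iterated, collapsing edges one at a time. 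If you want to salvage your rescaling idea you would have to build in an analogous compensation on the hairs; without it the construction does not produce a mixed \tat graph inducing the same class.
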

    
    \begin{proof}
        Let $e_1,\ldots, e_m$ be the edges comprising $C_1$, where $e_j$ has 
        length $l_j$. Let $v_1,\ldots, v_m$ be the vertices
        of these edges, so that $e_i$ connects $v_i$ and $v_{i+1}$ (here, 
        indices are taken modulo $m$). Let $f_{i,j}$,
        for $j=1,\ldots, n_i$ be the edges adjacent to $v_i$, other than
        $e_i, e_{i+1}$, in such a way that the edges have the cyclic order
        $e_{i+1}, e_{i}, f_{i,1}, \ldots, f_{i,n_i}$. Let
        $\epsilon < l(C_1)$. We would like to replace $C_1$ with
        a circle of length $l(C_1) - \epsilon$. We assume that $l_1 = \min_i 
        l_i$.
        
        If $\epsilon/m \leq l_1$, then we do the following:
        \begin{itemize}
            
            \item
            Each edge $e_i$ is modified to have length $l_i - \epsilon/m$.
            
            \item
            For any $i$ with $n_i = 1$, the length of $f_{i,1}$ is increased by
            $\epsilon/2m$.
            
            \item
            For any $i$ with $n_i > 1$, extrude an edge $g_i$ from the vertex 
            $v_i$ of
            length $\epsilon/2m$ so that one end of $g_i$ is adjacent to
            $e_i, e_{i+1}$ and $g_i$, and the other
            is adjacent to $f_{i,1}, \ldots, f_{i,n_i}$ and $g_i$,
            with these cyclic orders.
            
        \end{itemize}
        In the case when $\epsilon/m > l_1$, we execute the above procedure with
        $\epsilon$ replaced by $m\cdot l_1$, which results in a circle made up
        of fewer edges. After finitely many steps, we
        obtain the desired length for $C_1$.
        
        \begin{figure}[H]
            \includegraphics[width=100mm]{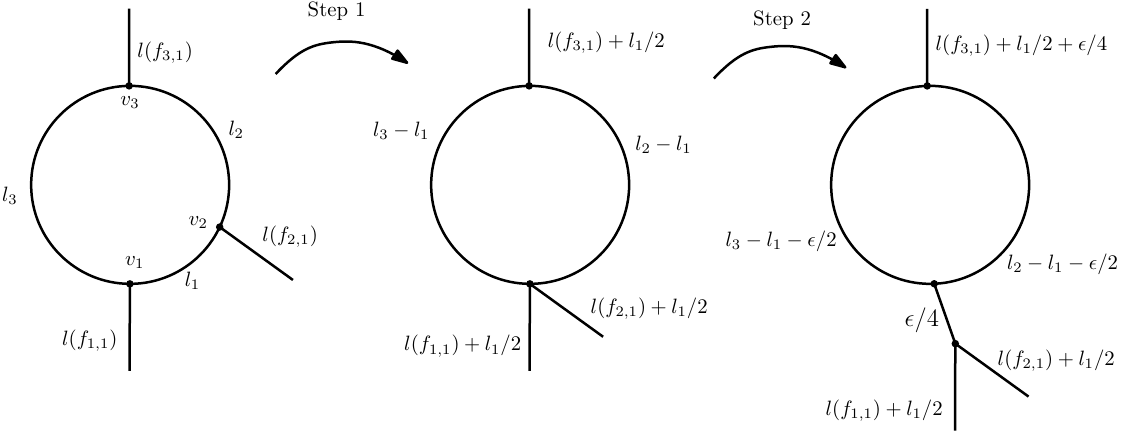}%
            \caption{Example of modification at a boundary component. Suppose 
                that
                $l_1<l_2<l_3$. In step $1$ we reduce the length of the circle 
                by $l_1$. In step
                $2$ we reduce it by $\epsilon$.}
            \label{fig:technical_remark}
        \end{figure}
        
    \end{proof}
    
    \begin{proof}[Proof of \Cref{thm:realization_mixed}]
        By definition, all surfaces corresponding to vertices in $D^{-1}(0)$ 
        are 
        connected because they are invariant by $\phi$. We have that 
        $\phi|_{\Si^0_{j,1}}:\Si^0_{j,1} \to \Si^0_{j,1}$ is periodic outside a 
        neighborhood of $\partial^1 \Si \cap \Si^0_{j,1}$ and that the 
        fractional Dehn 
        twist coefficients with respect to all the components in $\partial^1 
        \Si \cap 
        \Si^0_{j,1}$ are positive. Denote $B^0_{j,1}:= \partial \Si^{0}_{j,1} 
        \setminus 
        \partial^1 \Si$. By \Cref{thm:characterization_tat}, for each 
        $j=1,\ldots, 
        \beta_0$, there is a relative \tat graph $(\G^0_{j,1},B^0_{j,1})$ 
        embedded in 
        $\Si^0_{j,1}$ modeling $\phi|_{\G^0_{j,1}}$. Denote $\G[0]:= 
        \bigsqcup_{j} 
        \G^0_{j,1}$ and  $B[0]:=\bigsqcup_{j} B^0_{j,1}$. Then  $(\G[0], 
        B[0], \delta_0)$ is a relative mixed \tat graph of depth $0$ for 
        $\Si^{\leq 0}$ 
        (it is just a relative \tat graph) such that:
        \begin{enumerate}
            \item $\delta_0|_{\Si^0_{j,1}} = \pi$ for all $j= 1, \ldots, 
            \beta_0$.
            \item $[\phi|_{\Si^0}]_{\partial^1 \Si^0} = 
            [\phi_{\G[0]}]_{\partial^1 \Si^0}$. 
            \item $\phi|_{B[0]} = \phi_{\G[0]}|_{B[0]}$.
            \item All the vertices on $B[0]$ have valency $3$.
        \end{enumerate}
        
        Suppose that we have a relative mixed \tat graph $(\G[a-1]^\bullet, 
        B[a-1]^\bullet, \delta[a-1]_{\bullet})$ of depth $a$ embedded as a 
        spine in
        $\Si^{\leq a}$ 
        and with $B[a-1] = \partial \Si^{\leq 
            a-1} \setminus \partial^1 \Si$ such that:
        
        \begin{enumerate}
            \item \label{it:hyp_i} $\delta[a-1]_i$ is a constant function for 
            each $i=0, 
            \ldots, a -1$
            \item \label{it:hyp_ii} $[\phi|_{\Si^{\leq a -1}}]_{\partial^1 
                \Si^{\leq a- 1}} 
            = [\phi_{\G[a-1]}]_{\partial^1 \Si^{\leq a-1}}$
            \item \label{it:hyp_iii} $\phi|_{B[a-1]} = 
            \phi_{\G[a-1]}|_{B[a-1]}$.
            \item \label{it:hyp_iv} All the vertices on $B[a-1]$ have valency 
            $3$.
        \end{enumerate}
        We recall that $\phi_{\G[a-1]}$ denotes the mixed \tat automorphism 
        induced by 
        $(\G[a-1]^\bullet, B[a-1]^\bullet, \delta[a-1]_{\bullet})$. We extend 
        $\G[a-1]$ to a mixed \tat graph $\G[a]$ satisfying 
        \ref{it:hyp_i} - \ref{it:hyp_iv}. This proves the theorem by induction. 
        We focus on a particular orbit of surfaces. Fix  $j\in \{1, \ldots, 
        \beta_a\}$ and consider the surfaces $\Si^a_{j,1}, \ldots, 
        \Si^a_{j,\alpha_j} \subset \Si^a$ with $\phi(\Si^a_{j,k}) = 
        \Si^a_{j,k+1}$  and $\phi(\Si^a_{j,\alpha_j}) =\Si^a_{j,1}$. 
        
        For each $j$, we distinguish two types of boundary 
        components in the orbit $\bigsqcup_k \Si^a_{j,k}$:
        \begin{enumerate}
            \item[Type I)]  Boundary components that are connected to an 
            annulus whose other
            end is in $\Si^{a-1}$, we denote these by $\partial^{I}$.
            \item[Type II)] The rest: boundary components that are in $\partial 
            \Si$ and 
            boundary components that are connected to an annulus whose other 
            end is in 
            $\Si^{a+1}$, we denote these by $\partial^{II}$.
        \end{enumerate}
        
        Since we are doing the construction for an orbit, we use \textit{local 
 notation} in which not all the indices are specified so that the formulae is 
        easier to read.
        
        Let $\calA^{I}$ denote the union of annuli connected to boundary 
        components in $\partial^{I}$. These annuli are permuted by $\phi$. 
        Suppose that 
        there are $r'$ different orbits of annuli $\calA_1, \ldots 
        \calA_{r'}$, and let $\ell_i\in \NN$ be the length of the orbit 
        $\calA_i$. Let $s_i$ be the screw number of the orbit $\calA_i$ 
        (recall \Cref{def:screw_number} and \Cref{rem:screw_indep}). Let 
        $\calB_{i,1},
        \ldots, \calB_{i, \ell_i}$ 
        be the orbit of boundary components of $\Si^{a-1}$ that are contained 
        in the 
        orbit $\calA_i$. The metric of $\G[a-1]$ gives lengths to these 
        boundary 
        components and all the boundary components in the same orbit have the 
        same 
        length $l(\calB_{i,1}) \in \RR_{+}$. Consider the positive real numbers 
        \begin{equation}\label{eq:screw_n}
            \frac{s_1}{\ell_1} l(\calB_{1,1}), \ldots, \frac{s_{r'}}{\ell_{r'}} 
            l(\calB_{r',1})
        \end{equation}
        Using \Cref{lem:technical_mixed_bound}, we modify the metric structure 
        of 
        $\G[a-1]$ near each orbit $\calB_i$ so that $$\frac{s_1}{\ell_1} 
        l(\calB_{1,1})= \cdots = \frac{s_{r'}}{\ell_{r'}} l(\calB_{r',1}).$$ 
        This 
        is possible since we can make $l(\calB_{i,1})$ as small as needed.
        
        For each $i=1, \ldots, r'$, let $\calA_{i,1}, \ldots 
        \calA_{i,\ell_i}$ be the annuli in the orbit $\calA_i$ and let 
        $\calB'_{i,1}, \ldots \calB'_{i, \ell_i}$ be the boundary components 
        that they 
        share with $\Si^{a}$. Consider parametrizations $\eta_{i,1}, \ldots, 
        \eta_{\ell_i,1}$ given by \Cref{lem:linear_ex}. The metric on the 
        boundary 
        components of $B[a-1]$ and the parametrizations induce a metric on all 
        the 
        boundary components in $\partial^{I}$ that is invariant by $\phi$. 
        
        We observe that $\phi^{\alpha_j}|_{\Si^a_{j,1}}:\Si^a_{j,1} \to 
        \Si^a_{j,1}$ is 
        periodic and $\partial^{I} \cap \Si^{a}_{j,1}$ is a subset of boundary
        components 
        that have a metric. So we can apply \Cref{lem:tech_tat} and we get a 
        relative 
        metric ribbon graph $(\G^{a}_{j,1}, \partial^{II} \cap \G^{a}_{j,1})$ 
        and 
        parametrizations  of each cylinder in $(\Si^a_{j,1})_{\G^{a}_{j,1}}$ 
        with 
        properties $i), \ldots, iv)$ in the the Lemma. We can translate this 
        construction by $\phi$ to  the rest of the surfaces $\Si^a_{j,2}, 
        \ldots, 
        \Si^a_{j,\alpha_j}$. So we get graphs $\G^a_{j,k} 
        \hookrightarrow \Si^{a}_{j,k}$ and parametrizations for the cylinders 
        in $(\Si^a_{j,k})_{\G^{a}_{j,k}}$ for all $k=1, \ldots, \alpha_j$. The 
        construction assures us that $\phi|_{\Si^{a}_{j,\alpha_j}}: 
        \Si^{a}_{j,\alpha_j} \to 
        \Si^{a}_{j,1}$ sends $\G^a_{j,\alpha_j}$ to $\G^a_{j,1}$ isometrically 
        and that it takes retractions lines of the parametrizations in 
        $\Si^a_{j,\alpha_j}$ to retraction lines  in $\Si^a_{j,1}.$
        
        We proceed to extend $\G[a-1]$ to the orbit of $\Si^a_{j,1}$. For each 
        $i=1, \ldots, r'$ do the following:
        \begin{enumerate}
            \item[Step 1.] Remove $\calB_{i,1}, \ldots, \calB_{i,\ell_i}$ from 
            $\G[a-1]$.
            \item[Step 2.] Take $\epsilon>0$ small enough. Decrease by 
            $\epsilon$ the metric
            on all the 
            edges of $\G[a-1]$ adjacent to vertices in  $\calB_{i,1}, \ldots, 
            \calB_{i,\ell_i}$.
            \item[Step 3.] Add to the graph the retraction lines of the 
            parametrizations 
            $\eta_{i,1}, \ldots, \eta_{i, \ell_i}$ that were adjacent to 
            vertices in  
            $\calB_{i,1}, \ldots, \calB_{i,\ell_i}$. That is, if $v\in 
            \calB_{i,1} 
            \subset \calA_{i,1}$ include $\eta_{i,1}(\{v\} \times I)$. Define 
            the length of
            these segments as $\epsilon/2$.
            \item[Step 4.] Add to the graph the retraction lines of the 
            parametrizations of 
            the cylinders $(\Si^a_{j,k})_{\G^a_{j,k}}$ that start at the ends 
            of the lines 
            added in the previous step. Define the length of these segments as 
            $\epsilon/2$.
            \item[Step 5.] Add to the graph the graphs $\G^a_{j,1}, \ldots, 
            \G^a_{j,
                \alpha_j}$.
        \end{enumerate}
        
        We repeat this process for 
        all orbits of surfaces in $\Si^a$ and so we extend the graph $\G[a-1]$ 
        to all $\Si^a$. Denote $$\G[a]^a:= \bigsqcup_{j,k} \G^a_{j,k}.$$ Denote 
        the resulting graph by $\G[a]$.
        
        We make the following 
        observation: $(\G[a]_{\G[a]^a}, \TG[a]^a)$ is by construction isometric 
        to $(\G[a-1], \calB[a-1])$. We denote the induced relative mixed \tat 
        automorphism by $\phi_{\G[a]_{\G[a]^a}}$ which acts on $\Si^{\leq 
        a}_{\G[a]^a}$.
        By the previous observation there is an induced filtration on $\G[a]$: 
        $$\G[a] = \G[a]^0 \supset \G[a]^1 \supset \cdots \supset \G[a]^{a-1} 
        \supset \G[a]^a$$ and similarly for the relative parts. We define 
        $\delta_a: \G[a]^a \to\RR_\geq0$ to be the constant function equal to 
        the numbers \cref{eq:screw_n}.
        (which are by construction the same number).
        
        By the choice of $\delta_a$ and the parametrizations on the annuli that 
        join
        $\Si^{a-1}$ with $\Si^{a}$ we have that  $$\calD_{\delta_a}\circ
        \phi_{\G_{\G[a]}}: \Si_{\G[a]^a} \rightarrow \Si_{\G[a]^a}$$ is 
        compatible with
        the gluing $g_{a+1}$. So that $(\G[a], \calB[a])$ is a relative mixed 
        \tat graph follows from $I')$ in \Cref{rem:equiv_mixed_tat}.

        We have already made sure in the construction that \ref{it:hyp_i} and
        \ref{it:hyp_iv} hold in $\G[a]$. 
        
        Let's show that \ref{it:hyp_ii} and \ref{it:hyp_iii} also
        hold. Observe that by construction $\phi$ leaves $\G[a]$ invariant so 
        there is an automorphism $\tilde{\phi}_a: \Si_{\G[a]^a} \to 
        \Si_{\G[a]^a}$ induced. This automorphism  coincides with 
        $\calD_{\delta_a}\circ\phi_{\G_{\G[a]}}$ on $\TG[a]$ by the choice of 
        the parametrizations of the annuli $\calA$ and by the choice of the 
        number $\delta_a$. Also, by the choice of $\delta_a$ and \cite[Remark 
        7.28]{Bob} we see that they have the same screw numbers on the annuli 
        connecting the level $a-1$ and the level $a$. From this discussion we 
        get \ref{it:hyp_ii} and \ref{it:hyp_iii} and finish the proof.   
    \end{proof}
    
    \begin{remark}
        From the proof we get as an important consequence that a more 
        restrictive definition of a mixed \tat graph is valid: it is enough to 
        consider mixed \tat graphs where $\delta_i$  is a constant function 
        (i.e. a number) for all $i=0, \ldots, \ell$.
    \end{remark}

    \begin{cor}
        The monodromy associated with a reduced holomorphic function germ 
        defined on an isolated surface singularity is a mixed \tat twist.
        Conversely, let $C(\G)$ be the cone over the open book associated with 
        a  mixed \tat graph. Then there exists a complex structure on $C(\G)$ 
        and  
        a reduced holomorphic function germ $f:C(\G) \to \C$ inducing $\phi_\G$ 
        as the monodromy of its Milnor fibration.
    \end{cor}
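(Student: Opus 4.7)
The plan is to deduce this corollary by combining \Cref{thm:realization_mixed} with the realization result of Neumann--Pichon \cite{NeumPich} quoted in the introduction, so that each direction becomes a translation between two characterizations of the same class of surface automorphisms.

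For the first direction, I would start from a reduced holomorphic germ $f:(X,0)\to (\C,0)$ on an isolated surface singularity, with Milnor fiber $F$ and geometric monodromy $\phi:F\to F$ fixing $\partial F$ pointwise. The classical quasi-unipotent monodromy theorem, in the form proved by A'Campo and Lê (and recovered from the resolution graph), says that $\phi$ is pseudo-periodic and that some power $\phi^N$ is a composition of right-handed Dehn twists around disjoint simple closed curves that include every component of $\partial F$. Translating this information through \Cref{def:screw_number} and \ref{bloc:rotation_numbers} yields that all screw numbers of $\phi$ are negative and that the fractional Dehn twist coefficients at every component of $\partial^1 F=\partial F$ are strictly positive, so $\phi$ satisfies assumptions \ref{rest:i} and \ref{rest:ii}. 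Applying \Cref{thm:realization_mixed} then produces a relative mixed \tat graph in $F$ whose induced mapping class equals $[\phi]_{\partial F}$, so $\phi$ is a mixed \tat twist.

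For the converse, I would take a mixed \tat graph $(\G^\bullet,A^\bullet,\delta_\bullet)$ with thickening $\Si$ and induced pseudo-periodic automorphism $\phi_\G$. By \Cref{rem:screw_number} all screw numbers of $\phi_\G$ are negative and, via the properties recorded in \ref{bloc:tat_automorphism} applied levelwise, all fractional Dehn twist coefficients along $\partial\Si$ are positive; equivalently, some power of $\phi_\G$ is a product of right-handed Dehn twists around disjoint simple closed curves including every component of $\partial\Si$. The Neumann--Pichon theorem \cite{NeumPich} then furnishes a normal surface singularity $(Y,0)$ and a reduced holomorphic germ $g:(Y,0)\to(\C,0)$ whose geometric monodromy is isotopic to $\phi_\G$ relative to the boundary.

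It remains to identify $Y$ with $C(\G)$ and transport the analytic structure. Since the Milnor open book of $g$ has page $\Si$ and monodromy $\phi_\G$, its binding plus pages reconstruct the open book associated with $\G$, and standard cone-over-open-book arguments on normal surface singularities show that $Y$ is homeomorphic to $C(\G)$ by a homeomorphism that carries $g$ to the tautological projection on $C(\G)$. Pulling back the complex structure of $Y$ along this homeomorphism endows $C(\G)$ with the required complex structure and reduced holomorphic germ $f$ with monodromy $\phi_\G$. The main technical step, and the one I expect to require the most care, is this final identification: matching the Neumann--Pichon open book with the one canonically attached to the mixed \tat graph up to an isotopy of the boundary action, so that transporting the analytic structure is unambiguous.
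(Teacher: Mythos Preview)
Your proposal is correct and follows the same route as the paper: both directions are obtained by combining \Cref{thm:realization_mixed} with the Neumann--Pichon realization theorem \cite[Theorem 2.1]{NeumPich}, which is exactly the content of the paper's one-line proof. Your added paragraph identifying the Neumann--Pichon singularity $Y$ with $C(\G)$ via the open book and the cone-over-the-link description of a normal surface singularity is a reasonable elaboration of what the paper leaves implicit.
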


    \begin{proof}
    
    The statement follows from \Cref{thm:realization_mixed} 
    and \cite[Theorem 2.1]{NeumPich}. 
    \end{proof}

    \begin{example}\label{ex:realization}
        
        Let $\Si$ be the surface of \Cref{fig:realization_thm_f}. Suppose it is 
        embedded in $\RR^3$ with its boundary component being the unit circle 
        in the 
        $xy$-plane. 
        Consider the rotation of $\pi$ radians around the $z$-axis and denote 
        it by  $R_{\pi}$. By the symmetric embedding of the surface, it leaves 
        the surface  invariant. Isotope the rotation so that it is the identity 
        on $z\leq 0$ and it has fractional Dehn twist coefficient equal to 
        $1/2$. We denote the isotoped  automorphism by $T$.
        
        \begin{figure}[H]
            \includegraphics[width=100mm]{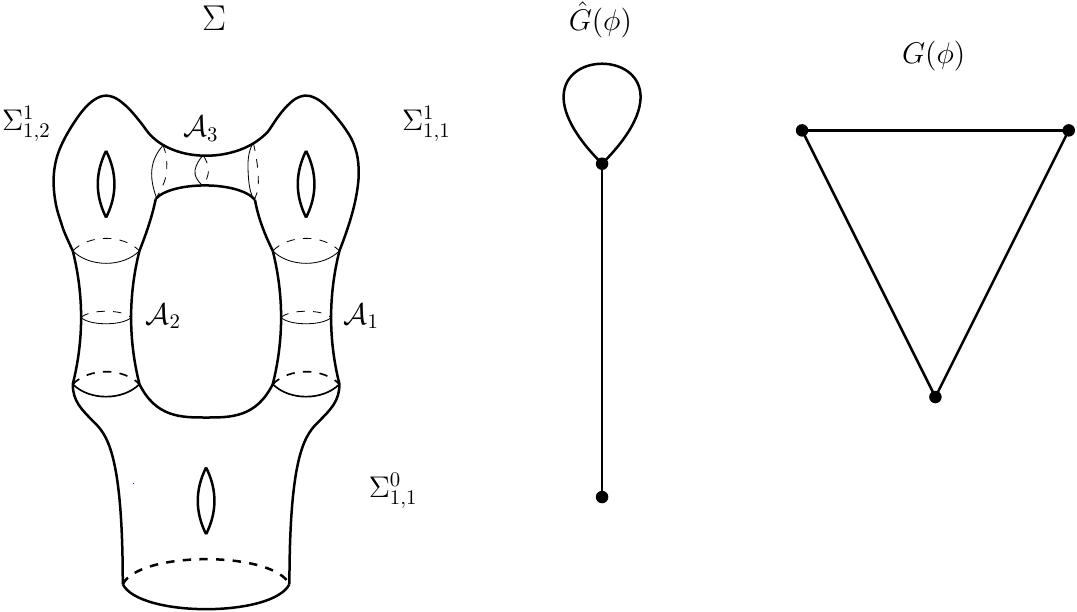}%
            \caption{On the left we see the surface $\Si$. On the right we see 
                the 
                corresponding graphs $\hat{G}(\phi)$ and $G(\phi)$ for the 
                depicted 
                canonical form.}
            \label{fig:realization_thm_f}
        \end{figure}

        More concretely, let $T:\RR^3 \to \RR^3$ defined by 
        \begin{equation}
        T(r,\theta,z):=\left\{
        \begin{array}{ccc}
        (r e^{i(\theta + \pi)},z)&\mbox{if}& z \leq \epsilon\\ (r e^{\theta + 
            \frac{z}{\epsilon}\pi},z)& \mbox{if} & 0 \leq z\leq \epsilon \\ id 
        & \mbox{if} &
        
        z \leq 0
        \end{array}
        \right.
        \end{equation}
        
        With $(r,\theta)$ polar coordinates on the $xy$-plane and $\epsilon > 
        0$ small.
        
        Let $D_i$ be a full positive Dehn twist on the annuli $\calA_i$, 
        $k=1,2,3$ (See
        \Cref{fig:realization_thm_f}). We define the 
        automorphism $$\phi:= \calD_3^{-1} \circ \calD_2^{-1} \circ
        T|_{\Si}.$$ 
        
        The automorphism comes already in canonical form. We construct the 
        corresponding Nielsen graph $\hat{G}(\phi)$ and we observe that the 
        corresponding  distance function  $D$ is not a  filtering function 
        since there is $1$ loop on $\hat{G}(\phi)$. So we apply 
        \Cref{rem:modified_canonical} and we get the almost-canonical form and 
        graphs of figure \Cref{fig:realization_thm_mod}.
        
        \begin{figure}[H]
            \includegraphics[width=100mm]{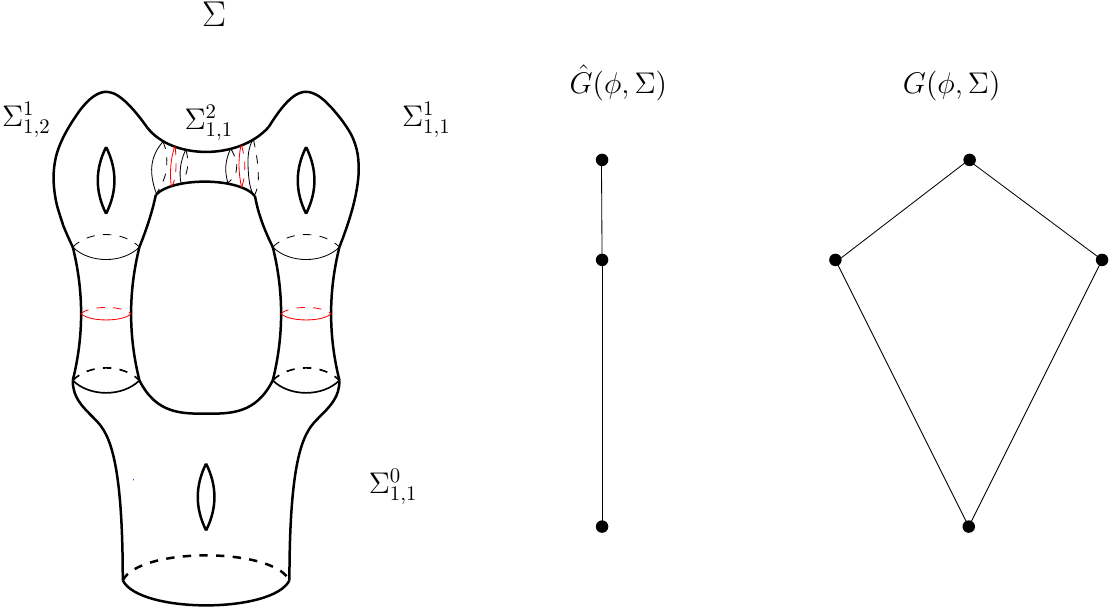}%
            \caption{On the left we see the surface $\Si$. On the right we see 
                the 
                corresponding graphs $\hat{G}(\phi)$ and $G(\phi)$ for the 
                depicted 
                almost-canonical form. In red we see the core curves of the 
                annuli in $\calA$.}
            \label{fig:realization_thm_mod}
        \end{figure}
        
        Now there are $4$ annuli in $\calA$ in this almost-canonical form. The 
        annuli
        $\calA_1$ and $\calA_2$ are exchanged by the monodromy, the Dehn twists
        $\calD_1^{-2}$ and $\calD_2$ indicate that the screw number of this 
        orbit is
        $-1$. And the annuli $\calA_{3,1}$ and $\calA_{3,2}$ that were 
        originally
        contained in $\calA_3$; these annuli are also exchanged by the 
        monodromy. We get
        that the screw  number on this orbit is $-1$.
        
        We start the construction process following 
        \Cref{thm:realization_mixed}. We
        construct a relative \tat graph $(\G[0], B[0])$ for 
        $\phi|_{\Si^0}:\Si^0 \to
        \Si^0$. We use  \cite[Theorem 5.27]{Bob} for this. In
        \Cref{fig:realization_thm_0} we can see this graph in blue.
        
        \begin{figure}[H]
            \includegraphics[width=45mm]{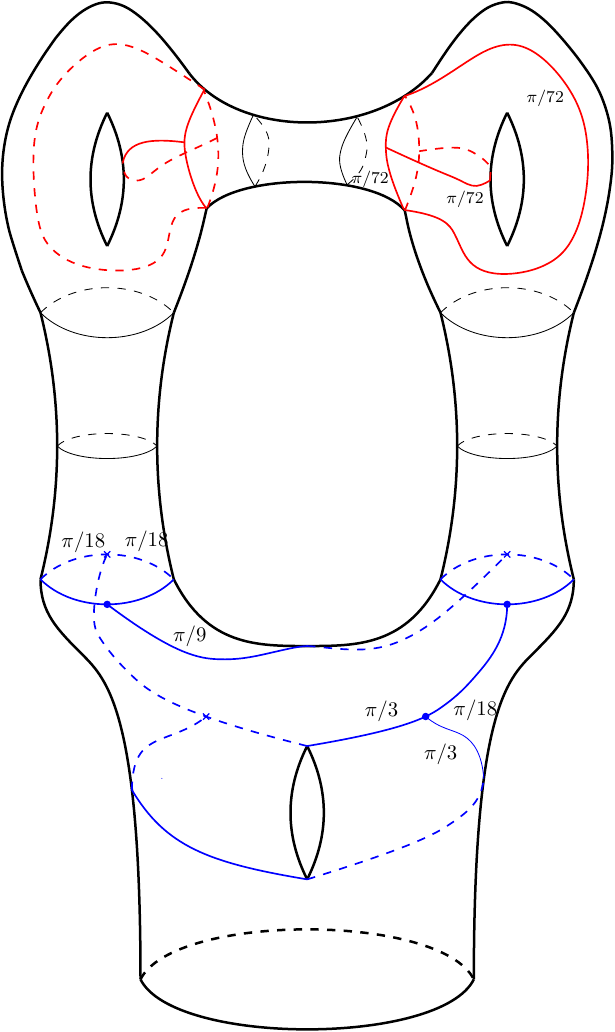}%
            \caption{The relative \tat graph $(\G[0], \B[0])$ embedded in 
                $\Si^0 \subset
                \Si$. The lengths are indicated on a few edges and the rest is 
                obtained by
                symmetry of the graph. We can also see in red an invariant 
                relative spine for
                $\Si^1$.} %
            \label{fig:realization_thm_0}
        \end{figure}
        
        In the next step we construct relative metric ribbon graphs for 
        $\Si^1$. In this
        case $\Si^1$ consist of two connected surfaces that are exchanged. Each 
        surface
        is a torus with two disks removed and one of the boundary components is 
        glued to
        an annulus connecting it with $\Si^2$. This graph will correspond with
        $\G^1_{\G^2}$ in the final mixed \tat graph. In the notation of the 
        Theorem we
        are using, it is $\G[1]^1$. In \Cref{fig:realization_thm_0} we can see 
        these
        relative metric ribbon graphs in red. In this step we also choose an 
        invariant
        product structure on $\Si^1_{\G^1}$
        
        Now we proceed to find the parametrizations $\eta_1$ and $\eta_2$. We 
        pick any
        parametrization $\eta_1$ for $\calA_1$. On the right part of
        \Cref{fig:example_thm_param} we can see the two retraction lines of the
        parametrization starting at the two vertices $p$ and $q$ of the 
        corresponding
        boundary component in $B[0].$  On the left part of that figure we see 
        two
        annuli, the upper one shows the  image of the two retraction lines by 
        $\phi$, on
        the lower annulus we see the retraction lines that we choose according 
        to
        \Cref{lem:linear_ex}.
        
        \begin{figure}[ht]
            \includegraphics[width=50mm]{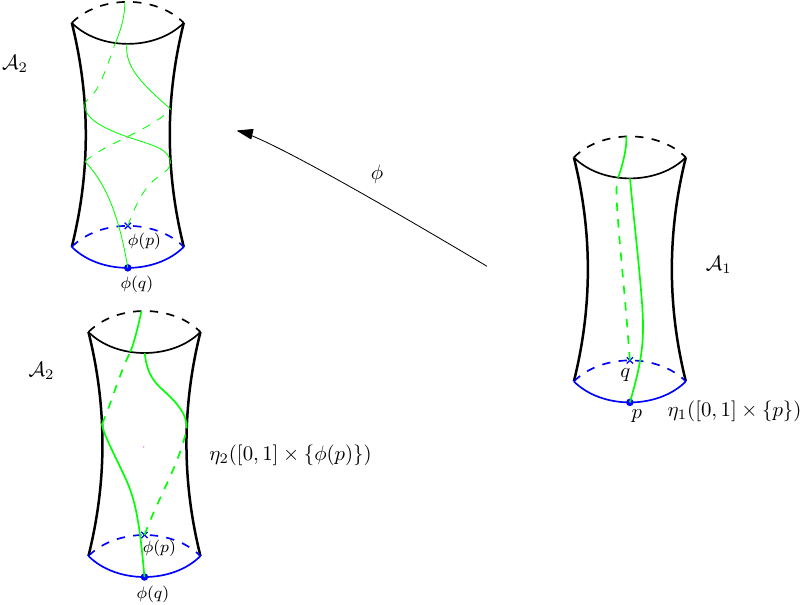}%
            \caption{The orbit of annuli $\calA_1$ and $\calA_2$. On the right 
                part we see
                $\calA_1$ with a chosen product structure and on the left part 
                we see two copies
                of $\calA_2$, the lower one with the parametrization given by
                \Cref{lem:linear_ex}.} %
            \label{fig:example_thm_param}
        \end{figure}
        
        We concatenate the chosen retraction lines of the annuli (green in
        \ref{fig:realization_thm_1}) with the corresponding retraction lines of 
        the
        product structure in $\Si^1$ (orange in the the picture).
        
        The metric on the red part is chosen so that each of the two components 
        of
        $\TG^1$ has the same length as the two relative components in 
        $\calB[0]$, that
        is equal to $\pi/9$. Since the screw number is $-1$ and there are two 
        annuli in
        the orbit we get that $\delta_1$ is the constant function $\pi/18.$
        
        \begin{figure}[ht]
            \includegraphics[width=45mm]{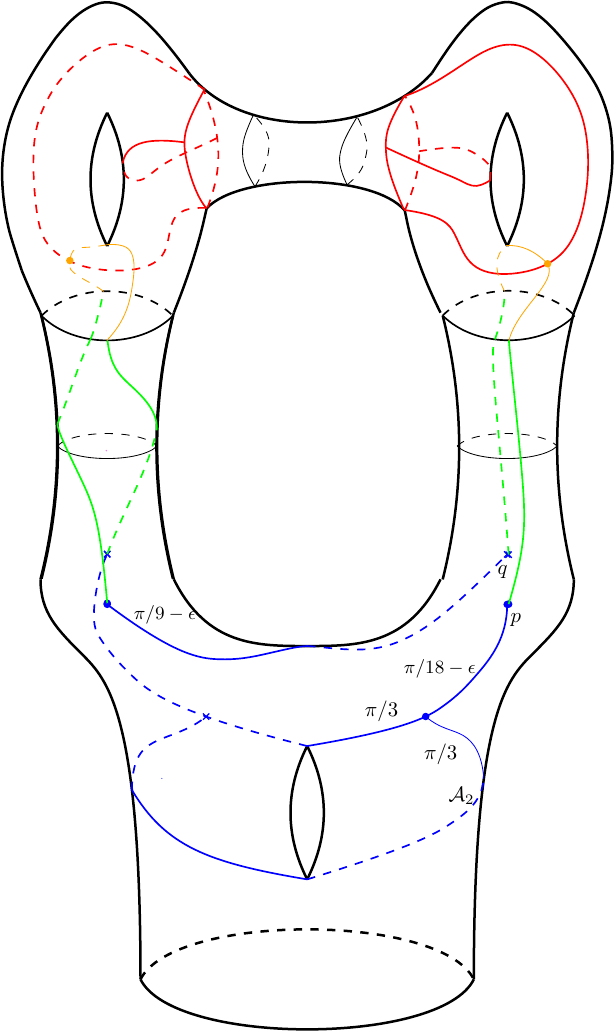}%
            \caption{The relative \tat graph $(\G[1], \B[1])$ embedded in 
                $\Si^1 \subset
                \Si$. The lengths are indicated. The green lines correspond to 
                retraction lines
                of the corresponding product structures $\eta_1$ and $\eta_2$ 
                and the orange
                lines correspond to retraction lines of the product structures 
                chosen for the
                cylinders $\Si^2_{\G[1]^1}$} %
            \label{fig:realization_thm_1}
        \end{figure}
        
        Similarly to the construction of the graph $(\G[1], \B[1])$, we 
        construct $\G[2]
        = \G$. We observe that $\Si^2$ is an annulus whose boundary components 
        are
        permuted by $\phi.$ It is attached along an orbit of annuli 
        $\calA_{3,1}$ and
        $\calA_{3,2}$ to $\Si^2$. It has total length equal to $4 \pi/72= 
        \pi/18$ Since
        this orbit of annuli has screw number $-1/2$, we get that $\delta_2$ is 
        the
        constant function $\pi/144.$
        
        \begin{figure}[ht]
            \includegraphics[width=45mm]{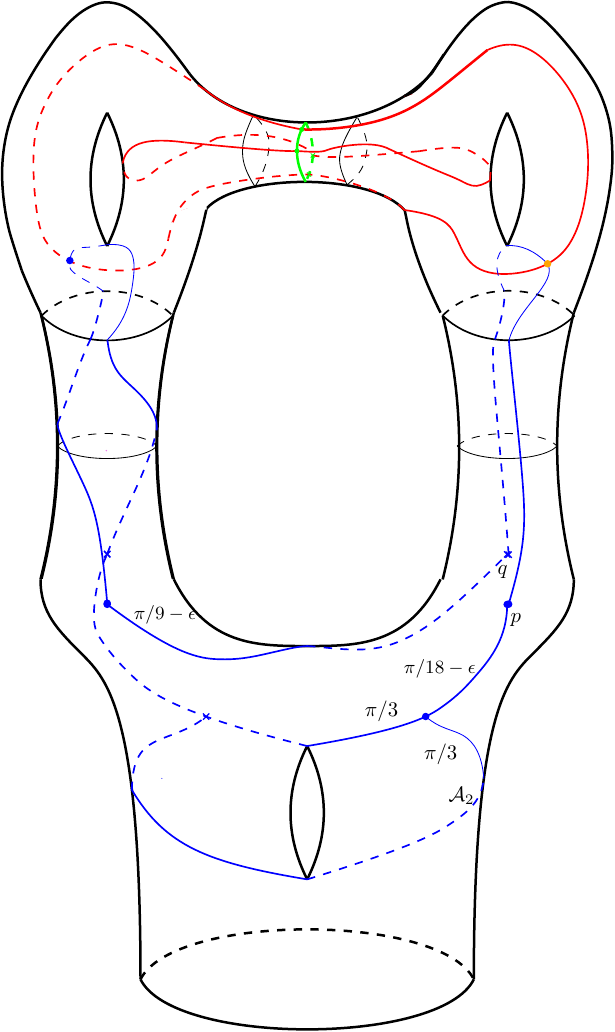}%
            \caption{The final mixed \tat graph $(\G^\bullet, \delta_\bullet)$. 
                In green we
                have $\G^2$; in red we have $\G^1 \setminus \G^2$ and in blue 
                $\G \setminus
                \G^1$.} %
            \label{fig:realization_thm_2}
        \end{figure}
    \end{example}

    \bibliographystyle{alpha}
    \bibliography{bibliography}
    
\end{document}